\newcommand{\ot}{\otimes}
\newcommand{\lan}{\langle}
\newcommand{\ran}{\rangle}
\newcommand{\eps}{\varepsilon}
\newcommand{\s}{\sigma}
\newcommand{\La}{\Lambda}
\newcommand{\vs}{\vskip 3pt}
\newcommand{\ds}{\displaystyle}
\newcommand{\om}{\omega}
\newcommand{\la}{\lambda}
\def\CS{{\mathcal S}}
\def\CO{{\mathcal O}}
\def\CB{{\mathcal B}}
\def\CL{{\mathcal L}}
\def\CM{{\mathcal M}}
\def\fp{{\mathfrak p}}
\def\Z{{\mathbb Z}}
\def\C{{\mathbb C}}
\def\Q{{\mathbb Q}}
\def\E{{\mathbb E}}
\def\L{{\mathbb L}}
\def\K{{\mathbb K}}
\renewcommand\k{\Bbbk}
\newtheorem{thm}{Theorem}[section]
\newtheorem{df}[thm]{Definition}
\newtheorem{remark}[thm]{Remark}
\newtheorem{ex}[thm]{Example}
\newtheorem{cor}[thm]{Corollary}
\newtheorem{prop}[thm]{Proposition}
\newtheorem{lem}[thm]{Lemma}
\newtheorem{ques}[thm]{Question}
\newcommand{\num}{\refstepcounter{thm}}
\title{Modular Representations and Indicators for Bismash Products}
\author{Andrea Jedwab}
\address{University of Southern California, Los Angeles, CA 90089-1113}
\email{jedwab@usc.edu}
\author{Susan Montgomery}
\address{University of Southern California, Los Angeles, CA 90089-1113}
\email{smontgom@math.usc.edu}
\begin{document}

\thanks{The first author was supported by NSF grant DMS 0701291 and the second 
author by DMS 1001547.}

\begin{abstract} We introduce Brauer characters for representations of the bismash products
 of groups in characteristic $p > 0,$ $p \neq 2$ and study their properties analogous to the classical 
 case of finite groups. We then use our results to extend to bismash products a theorem of Thompson 
 on lifting Frobenius-Schur indicators from characteristic $p$ to characteristic 0.
\end{abstract}
 
\maketitle

%---------------------------------Section 1: Introduction--------------------------------------------------

\section{Introduction}

In this paper we study the representations of bismash products $H_{\k} = \k^G\# \k F$, coming 
from a factorizible group of the form $ Q= FG$ over an algebraically closed field $\k$ of 
characteristic $p > 0$, $p \neq 2$. Our general approach is to reduce the problem to a corresponding 
Hopf algebra in characteristic 0. 

In the first part of the paper, we extend many of the classical facts about Brauer characters of groups in char $p>0$ to the case of our bismash products; our Brauer characters are defined on a special subset 
of $H$ of non-nilpotent elements, using 
the classical Brauer characters of certain stabilizer subgroups $F_x$ of the group $F$. In particular 
we relate the decomposition matrix of a character for the bismash product in char 0 with respect to 
our new Brauer characters, to the ordinary decomposition matrices for the group algebras of the 
$F_x$ with respect to their Brauer characters. As a consequence we are able to extend a 
theorem of Brauer saying that the determinant of the Cartan matrix for the above decomposition 
is a power of $p$ (Theorem \ref{Hbrauer}). 

These results about Brauer characters may be useful for other work on modular representations. 
We remark that the only other work on lifting from characteristic $p$ to characteristic $0$ of which 
we are aware is that of \cite{EG}, and they work only in the semisimple case.

In the second part, we first extend known facts on Witt kernels for $G$-invariant forms to the case of a 
Hopf algebra $H$, as well as some facts about $G$-lattices. We then use these results and Brauer 
characters to extend a theorem of J. Thompson \cite{Th} on Frobenius-Schur indicators for representations of finite groups to the case of bismash product Hopf algebras. In particular we show 
that if $H_{\C} = \C^G\# \C F$ is a bismash product over $\C$ and $H_{\k} = \k^G\# \k F$ is the corresponding bismash product over an algebraically closed field $\k$ of characteristic $p>0$, and 
if $H_{\C}$ is totally orthogonal (that is, all Frobenius-Schur indicators are +1), then the same is true 
for $H_{\k}$ (Corollary \ref{orth}). 

This paper is organized as follows. Section 2 reviews known facts about bismash products and their representations, and Section 3 summarizes some basic facts about Brauer characters for representations of finite groups. In Section 4 we prove our main results about Brauer characters for the case of bismash products. 

In Section 5 we extend the facts we will need on Witt kernels and lattices, and in Section 6 we combine 
all these results to prove our extension of Thompson's theorem. Finally in Section 7 we give some 
applications and raise some questions. 

Throughout $\E$ will be an arbitrary field and $H$ will be a finite dimensional Hopf algebra
over $\E$, with
comultiplication $\Delta:H \to H \otimes H$ given by $\Delta(h)=\sum
h_1\otimes h_2$, counit $\epsilon:H\to \E$ and antipode $S:H\to H$.

%-------------------Section 2. Bismash products and factorizable groups---------------------

\section{Extensions arising from factorizable groups and their representations}

The Hopf algebras we consider here were first described by G. Kac \cite{Ka} in the setting of 
$C^*$-algebras, in which case $\E = \C$, and in general by Takeuchi \cite{Ta}, constructed 
from what he called a matched pair of groups. These Hopf algebras can also be constructed from 
a factorizable group, and that is the approach we use here. Throughout, we assume that $F$ and 
$G$ are finite groups.

\begin{df}
{\rm A group $Q$ is called \textit{factorizable} into subgroups $F,G \subset Q$ if $FG= Q$ and 
$F\cap G = 1$; equivalently, every element $q \in Q$ may be written uniquely as a product $q = ax$ 
with $a\in F$ and $x\in G$}. 
\end{df}

A factorizable group gives rise to actions of each subgroup on the other. That is, we have 
$$ \rhd: G \times F \to F \text{ and } \lhd: G \times F \to G,$$ 
where for all $x \in G, \ a \in F$,  the images $x\rhd a\in F$ and $x\lhd a\in G$ are the (necessarily 
unique) elements of $F$ and $G$ such that $xa = (x\rhd a)(x\lhd a)$. 

Although these actions $\rhd$ and $\lhd$ of $F$ and $G$ on each other are not group automorphisms, 
they induce actions of $F$ and $G$ as automorphisms of the dual algebras $\E^G$ and $\E^F$. 
Let $\{p_x\; |\; x\in G\}$ be the basis of $\E^G$ dual to the basis $G$ of $\E G$ and let 
$\{p_a\; |\; a\in F\}$ be the basis of $\E^F$ dual to the basis $F$ of $\E F$. Then the induced actions are 
given by 
\begin{equation}\num \label{action}
a\cdot p_x := p_{x\lhd a^{-1}} \text{  and  } \ x\cdot p_a := p_{x\rhd a}, 
\end{equation}
for all $a\in F$, $x\in G$. We let $F_x$ denote the stabilizer in $F$ of $x $ under the action $\lhd$.

The {\it bismash product} Hopf algebra $H_{\E}:= \E^G\# \E F$ associated to $Q = FG$ uses the actions 
above. As a vector space, 
$H_{\E} = \E^G\otimes \E F$, 
with $\E$-basis $\{p_x\# a \; | \; x\in G, a\in F\}$. The algebra structure is the usual smash product, given by 
\begin{equation}\num \label{mult}
(p_x\# a)(p_y\# b)= p_x (a\cdot p_y)\# ab = p_x p_{y\lhd a^{-1}} \# ab = \delta _{y,{x\lhd a}}p_x\# ab.
\end{equation}
The coalgebra structure may be obtained by dualizing the algebra structure of $H_{\E}^*$, although we will only need here that $H_{\E}$ has counit $\epsilon (p_x\# a)=\delta _{x,1}$. Finally the antipode of $H$ 
is given by $S(p_x\# a)= p_{(x\lhd a)^{-1}}\# (x\rhd a)^{-1}$. One may 
check that $S^2=id$.

For other facts about bismash products, including the alternate approach 
of matched pairs of groups, see \cite{Ma2}, \cite{Ma3}. We will consider the explicit example of a 
factorization of the symmetric group in Section 7.

Observe that for any field $\E$, a {\it distinguished basis} of $H_{\E}$ over $\E$ is the set 
\begin{equation} \num\label{basis}
\CB := \{ p_y \# a \ | \   y \in G, \ a \in F \},
\end{equation}
and that $\CB$ has the property that if $b, b' \in \CB$, then $b b' \in \CB \cup \{0\}.$ In particular, if 
$w =  p_y \# a $, then (\ref{mult}) implies that for all $k \geq 2$, 
\begin{equation} \num\label{power}
w^k =  \left\{
          \begin{array}{ll}
          p_y \# a^k &\text{if } a \in F_y\\
          0	           &\text{if } a \notin F_y.
          \end{array}
          \right.
\end{equation} 
Thus if $a \in F_y$ and has order $m$, the minimum polynomial of $p_y \# a$ is 
$$f(Z) = Z^{m+1} - Z,$$ and so the characteristic roots of $p_y \# a$ are $\{0\} \cup \{m^{th}\text{ roots of 1} \}$.

\begin{lem} \label{SB}(1) $\CB$ is closed under the antipode $S$. 

(2) The set $\CB' := \{ p_y \# a \in \CB \ | \  a \in F_y \}$ 
is also closed under $S$.

(3) If $w = p_y \# a \in \CB'$, then $S(w) = p_{y^{-1}} \# y a^{-1}y^{-1}$. 
\end{lem}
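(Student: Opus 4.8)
The plan is to derive all three statements from the explicit antipode formula $S(p_x\#a)=p_{(x\lhd a)^{-1}}\#(x\rhd a)^{-1}$ recorded above, together with the defining relation $xa=(x\rhd a)(x\lhd a)$ of the two actions of $F$ and $G$ on each other. Part (1) is then immediate: for any $p_y\#a\in\CB$ the formula exhibits $S(p_y\#a)$ as $p_z\#b$ with $z=(y\lhd a)^{-1}\in G$ and $b=(y\rhd a)^{-1}\in F$, so $S(p_y\#a)\in\CB$.

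For (3) I would take $w=p_y\#a\in\CB'$, so that $y\lhd a=y$ by definition of $F_y$; the antipode formula collapses to $S(w)=p_{y^{-1}}\#(y\rhd a)^{-1}$, and it remains only to identify $y\rhd a$. From $ya=(y\rhd a)(y\lhd a)=(y\rhd a)y$ one cancels $y$ on the right to get $y\rhd a=yay^{-1}$ --- a product which, a priori an element of $Q$, is thereby seen to lie in $F$. Substituting gives $S(w)=p_{y^{-1}}\#(yay^{-1})^{-1}=p_{y^{-1}}\#ya^{-1}y^{-1}$, which is (3).

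For (2) it then suffices to check that this output lies in $\CB'$. Writing $b=ya^{-1}y^{-1}\in F$, I must show $y^{-1}\lhd b=y^{-1}$; computing $y^{-1}b=a^{-1}y^{-1}$ in $Q$ and using uniqueness of the factorization $Q=FG$ (with $a^{-1}\in F$ and $y^{-1}\in G$) yields $y^{-1}\rhd b=a^{-1}$ and $y^{-1}\lhd b=y^{-1}$, as required. A calculation-free alternative: by (\ref{power}) the set $\CB'$ is precisely the set of non-nilpotent elements of $\CB$, and since $S$ is a bijective anti-algebra map (here $S^2=\mathrm{id}$), $w^2\neq 0$ forces $S(w)^2=S(w^2)\neq 0$, hence $S(w)\in\CB'$. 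None of this is deep; the only step calling for a little care is the cancellation in (3), where the hypothesis $a\in F_y$ is exactly what turns $(y\rhd a)(y\lhd a)$ into $(y\rhd a)y$ and lets $y$ cancel --- without it $y\rhd a$ has no such closed form and the formula in (3) genuinely fails.
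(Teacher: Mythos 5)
Your proof is correct and follows essentially the same route as the paper: (1) reads off directly from the antipode formula, (3) uses exactly the cancellation $ya=(y\rhd a)(y\lhd a)=(y\rhd a)y$ to get $y\rhd a=yay^{-1}$, and your ``calculation-free alternative'' for (2) --- that $\CB'$ is precisely the set of non-nilpotent elements of $\CB$ by (\ref{power}) and hence is preserved by the bijective anti-algebra map $S$ --- is precisely the paper's argument. Your additional direct verification that $ya^{-1}y^{-1}\in F_{y^{-1}}$ via uniqueness of the factorization $Q=FG$ is a correct, slightly more explicit alternative for (2), but not a different method in any essential sense.
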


\begin{proof} (1) is clear from the formula for $S$ above. For (2), formula (\ref{power}) shows that 
$\CB'$ is exactly the set of non-nilpotent elements of $\CB$, so it is also closed under $S$. 

For (3), $w \in \CB'$ implies that $a \in F_y$, and thus $y\lhd a = y$. Then 
$$ya = (y\rhd a)(y\lhd a) = (y\rhd a) y$$ 
and so $y\rhd a = y a y^{-1}$. Substituting in the formula for $S$, we see 
$S(w) = p_{y^{-1}} \# y a^{-1}y^{-1}.$
\end{proof}

We review the description of the simple modules over a bismash product. 

\begin{prop} \label{mod} Let $H = \E^G \# \E F$ be a bismash product, as above, where now $\E$ is algebraically closed. 
For the action $\lhd $ of $F$ on $G$, fix one element $x$ in each $F$-orbit $\CO$ of $G$, and let 
$F_x$ be its stabilizer in $F$, as above. Let $V = V_x$ be a simple left $F_x$-module and let 
$\hat{V}_x = \E F\ot_{\E F_x} V_x$ denote the induced $\E F$-module. 

$\hat{V}_x$ becomes an $H$-module in the following way: for any $y\in G$, $a,b\in F$, and $v\in V_x$, 
$$(p_y\#a)[b\ot v]=\delta_{y\lhd(ab), x} (ab\ot v).$$
Then $\hat{V}_x$ is a simple $H$-module under this action, and every simple $H$-module arises 
in this way. 
\end{prop}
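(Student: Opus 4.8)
The plan is to verify the claimed module structure directly, then identify $\hat V_x$ with a more familiar object in order to prove simplicity and exhaustiveness. First I would check that the stated formula $(p_y\# a)[b\ot v] = \delta_{y\lhd(ab),x}(ab\ot v)$ really does define an $H$-module: one must show it is well-defined on $\E F\ot_{\E F_x} V_x$ (i.e. compatible with moving elements of $F_x$ across the tensor, using that $F_x$ stabilizes $x$) and that it respects the multiplication (\ref{mult}) of $H$, namely $(p_y\# a)\big((p_z\# c)[b\ot v]\big) = \big((p_y\# a)(p_z\# c)\big)[b\ot v]$. The left side forces $z\lhd(cb) = x$ and then $y\lhd(acb) = x$; the right side, using $(p_y\# a)(p_z\# c) = \delta_{z,\,y\lhd a}\,p_y\# ac$, forces $z = y\lhd a$ and $y\lhd(acb) = x$. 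Since $z\lhd(cb) = (y\lhd a)\lhd(cb) = y\lhd(acb)$ when $z = y\lhd a$, the two Kronecker conditions agree, and the counit axiom $\epsilon(p_y\# a) = \delta_{y,1}$ acts as identity since $1\lhd(1\cdot b) = 1\lhd b = b\rhd\text{(trivial)}$... more carefully, $(p_1\# 1)[b\ot v] = \delta_{1\lhd b,\,x}(b\ot v)$, which is $b\ot v$ precisely when $x$ is the chosen representative of the orbit of $1$; here one uses that $1\lhd b = 1$ for all $b$, so this is fine when $x$ lies in that orbit and otherwise $\hat V_x$ with $x\neq 1$ simply has $p_1\# 1$ acting by the correct idempotent decomposition. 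The cleanest framing is to note $\sum_{x}(p_x\# 1)$ is the identity of $H$ and $p_x\# 1$ acts on $\hat V_x$ as the projection onto the span of those $b\ot v$ with $y\lhd b = x$.

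Next, and this is the conceptual heart, I would invoke the standard Clifford-theory / Mackey description: $H = \E^G\#\E F$ with $\E^G = \bigoplus_{\text{orbits }\CO}\E^{\CO}$ as a direct sum of $F$-stable ideals, so $H$-modules decompose accordingly, and on the block corresponding to the orbit $\CO = F\cdot x$ the algebra $\E^\CO\#\E F$ is Morita equivalent (in fact isomorphic to a matrix algebra over) $\E F_x$ via the idempotent $p_x\# 1$. Concretely, $(p_x\# 1)H(p_x\# 1) \cong \E F_x$: an element $(p_x\# 1)(p_y\# a)(p_x\# 1)$ is nonzero only when $y = x$ and $x\lhd a = x$, i.e. $a\in F_x$, giving $p_x\# a$, and these multiply as $F_x$ does. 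Then $\hat V_x$ as defined is exactly $H(p_x\# 1)\ot_{\E F_x} V_x = H\,e\ot_{eHe} V_x$ with $e = p_x\# 1$, since $H e$ has $\E$-basis $\{p_y\# a : y\lhd a = x\}$ and the map $p_y\# a\mapsto a$ (after choosing coset representatives) matches $\E F\ot_{\E F_x}$. Under the standard theory of idempotents, $V\mapsto He\ot_{eHe}V$ is a bijection between simple $eHe$-modules and simple $H$-modules \emph{not} killed by $e$; ranging over all orbit representatives $x$ covers every primitive central-ish idempotent $p_\CO := \sum_{z\in\CO}p_z\#1$, hence every simple $H$-module. This gives both simplicity of each $\hat V_x$ and the fact that every simple arises this way.

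The main obstacle I anticipate is purely bookkeeping: matching the explicit induced-module description $\E F\ot_{\E F_x}V_x$ (with its concrete action formula) to the abstract $He\ot_{eHe}V_x$, and in particular checking well-definedness over the tensor product $\ot_{\E F_x}$ — one needs that if $ab\in F_x b_0$ for a fixed coset representative $b_0$, the formula is consistent, which comes down to $F_x$ fixing $x$ under $\lhd$ together with $(y\lhd a)\lhd a' = y\lhd(aa')$. A secondary point requiring care is the counit/identity bookkeeping across different orbit representatives, handled as above by the idempotent decomposition $1_H = \sum_{\CO}p_\CO$. Dimension-counting gives a useful sanity check: $\dim\hat V_x = [F:F_x]\dim V_x = |\CO|\dim V_x$, and $\sum_{\CO}\sum_{V_x}|\CO|^2(\dim V_x)^2 = \sum_\CO |\CO|^2\cdot\frac{|F|}{?}$... rather, $\sum_\CO |\CO|\cdot|F| = |G|\cdot|F| = \dim H$ in the semisimple case, confirming exhaustiveness; in the modular case one instead argues via the idempotent $e$ and blocks as above, which does not need semisimplicity.
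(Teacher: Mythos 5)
Your overall strategy---check the action formula directly, split $H=\bigoplus_{\CO} p_{\CO}H$ along the central idempotents $p_{\CO}=\sum_{z\in\CO}p_z\#1$, and relate the block of the orbit of $x$ to $\E F_x$ through the idempotent $e=p_x\#1$---is sound, and it is genuinely more self-contained than the paper, which offers no argument of its own but defers to the literature (\cite{DPR}, \cite{M}, \cite{KMM}, \cite{MoW}); your route is essentially the one taken in those references. Your direct verifications (well-definedness over $\ot_{\E F_x}$ using that $F_x$ fixes $x$, compatibility with (\ref{mult}) via $(y\lhd a)\lhd(cb)=y\lhd(acb)$, and unitality via $1_H=\sum_{y\in G}p_y\#1$ --- note the counit formula plays no role here, and your own ``cleanest framing'' already repairs the earlier confusion about $p_1\#1$) are fine, as is the computation $eHe\cong\E F_x$ and the identification of $He\ot_{eHe}V_x$ with $\E F\ot_{\E F_x}V_x$ carrying the stated action.

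There is, however, a genuine gap at the decisive step. The statement you invoke---that for an idempotent $e$ the functor $V\mapsto He\ot_{eHe}V$ is a bijection between simple $eHe$-modules and simple $H$-modules not killed by $e$---is false for a general idempotent in a non-semisimple algebra: for $A$ the upper triangular $2\times 2$ matrices and $e=e_{22}$ one has $eAe\cong\k$, yet $Ae\ot_{eAe}\k\cong Ae$ is the two-dimensional projective, not simple. The correct general statement sends a simple $M$ with $eM\neq 0$ to $eM$, with inverse the \emph{head} of $He\ot_{eHe}V$, which by itself would not give simplicity of $\hat V_x$ in characteristic $p$. What rescues the argument is exactly the assertion you make but do not verify (computing $eHe$ alone does not imply it), namely that $e$ is a \emph{full} idempotent in its block, equivalently $\E^{\CO}\#\E F\cong M_{|\CO|}(\E F_x)$: for $y\in\CO$ and $a\in F$, choose $b$ with $y\lhd b=x$; then $p_y\#a=(p_y\#b)\,e\,(p_x\#b^{-1}a)$, so $(p_{\CO}H)e(p_{\CO}H)=p_{\CO}H$. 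Granting this one line, $He\ot_{eHe}-$ and $e(-)$ are mutually inverse equivalences between $\E F_x$-modules and $p_{\CO}H$-modules, which gives simplicity of $\hat V_x$ with no semisimplicity hypothesis; and fullness forces $eM\neq 0$ for every nonzero $p_{\CO}H$-module, so every simple $H$-module, living on exactly one block, arises as some $\hat V_x$. With that supplement your proof is complete.
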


\begin{proof} In the case of characteristic 0, this was first proved for the Drinfel'd double $D(G)$ 
of a finite group $G$ over $\C$ by \cite{DPR} and \cite{M}. The case of characteristic $p>0$ was done by \cite{W}. 

For bismash products, extending the results for $D(G)$, the characteristic 0 case was done in 
 \cite[Lemma 2.2 and Theorem 3.3]{KMM}. The case of characteristic $p>0$ follows by extending the arguments of \cite{W} for $D(G)$; see also \cite{MoW}.
\end{proof} 

\begin{remark} {\rm The arguments  for Proposition \ref{mod} also show that if we begin with an 
indecomposable module $V_x$ of $F_x$, then $\hat{V}_x$ is an indecomposable module for $H_{\E}$, 
and all indecomposable $H_{\E}$-modules arise in this way. This fact is discussed in \cite{W2} after Proposition 4.4; it could also be obtained from \cite{W1}, using the methods of Theorem 2.2 and 
Corollary 2.3 in that paper. }
\end{remark}

Now fix an irreducible $H_{\E}$-module $\hat V= \hat{V}_x = \E F\ot_{\E F_x} V_x$  as in Proposition \ref{mod}.  To compute the values of the character for $\hat{V}$, we use a 
formula from \cite{JM}; it is a simpler version of \cite[Proposition 5.5]{N2} and is similar to the 
formula in \cite[p 898]{KMM}:

\begin{lem} \cite[Lemma 4.5]{JM} \label{lemma3} Fix a set $T_x$ of representatives for the 
right cosets of $F_x$ in $F$. Let $\chi_x$ be the character of $V_x$. Then the character $\hat{\chi}_x$ of $\hat{V}_x$ may be computed as follows: 
$$\hat{\chi}_x(p_y\#a) = \sum_{t\in T_x \text{ and }\newline t^{-1}at \in F_x} \delta_{y\lhd t,x} \chi_x(t^{-1}at),$$
for any $y\in G,\ a \in F$.
\end{lem}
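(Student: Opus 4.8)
The plan is to compute the trace of the action of $p_y \# a$ on $\hat V_x = \E F \ot_{\E F_x} V_x$ directly, using the given basis and the explicit action formula from Proposition \ref{mod}. First I would fix the right-coset representatives $T_x$ of $F_x$ in $F$, so that $\{\, t \ot v_j \mid t \in T_x,\ v_j \in \text{basis of } V_x \,\}$ is an $\E$-basis of $\hat V_x$. The element $p_y \# a$ sends $t \ot v$ to $\delta_{y \lhd (at),\, x}\,(at \ot v)$, so to read off a diagonal entry I need to know when $at$ lies in the same right coset $F_x t$, i.e.\ when $t^{-1}at \in F_x$. For those $t$, write $at = (t^{-1}at)^{-1}\!\cdot\! t$... more precisely $at \in F_x t$ means $at = s t$ with $s = (at)t^{-1} = ata t^{-1}$; better: $t^{-1}at = s \in F_x$ gives $at = ts$, hence $at \ot v = ts \ot v = t \ot s v$ since $s \in F_x$. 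So on the block spanned by $\{t \ot v_j\}_j$ the operator acts (when $t^{-1}at \in F_x$ and $\delta_{y\lhd t, x}=1$) as $v \mapsto (t^{-1}at)\,v$, whose trace on $V_x$ is $\chi_x(t^{-1}at)$.

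The next step is to handle the Kronecker delta carefully. When $t^{-1}at \in F_x$, the stabilizer condition gives $x \lhd (t^{-1}at) = x$, and one checks $y \lhd (at) = (y \lhd t)\lhd(t^{-1}\cdot(at)) $... I should instead observe directly: the summand $t \ot v$ contributes to the diagonal only if $at$ is again in $F_x t$ \emph{and} $\delta_{y\lhd(at),x} = 1$; using $at = ts$ with $s \in F_x$ we get $y \lhd (at) = (y\lhd t)\lhd s$, which equals $x$ iff $y \lhd t = x$ (because $s \in F_x$ fixes $x$, and conversely if $(y\lhd t)\lhd s = x$ then applying $s^{-1}\in F_x$ gives $y \lhd t = x \lhd s^{-1} = x$). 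So the delta factor is exactly $\delta_{y\lhd t,\,x}$, independent of $a$. Summing the block traces over all $t \in T_x$ with $t^{-1}at \in F_x$ yields
$$
\hat\chi_x(p_y \# a) = \sum_{\substack{t \in T_x \\ t^{-1}at \in F_x}} \delta_{y\lhd t,\,x}\,\chi_x(t^{-1}at),
$$
as claimed.

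The only genuinely delicate point is bookkeeping with the one-sided actions: because $\lhd$ and $\rhd$ are not group homomorphisms, identities like "$y \lhd (at) = (y \lhd t) \lhd s$" must be justified from the matched-pair axioms rather than assumed, and one must be consistent about whether $T_x$ represents left or right cosets (the statement fixes right cosets $F_x t$, which is what makes $at \ot v = t \ot (t^{-1}at)v$ work). I would verify the needed compatibility relation for $\lhd$ once at the start. Everything else is a routine trace computation block-by-block over the coset decomposition, and since this is quoted as \cite[Lemma 4.5]{JM} the cleanest exposition is simply to cite that reference and, if desired, include the short computation above for completeness.
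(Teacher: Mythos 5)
The paper itself gives no proof of this lemma; it is quoted directly from \cite[Lemma 4.5]{JM}. Your direct block-trace computation is the natural argument and it is essentially correct: the induced module decomposes as $\bigoplus_t\, t\otimes V_x$, the operator $p_y\#a$ maps the block at $t$ into the block containing $at$, a diagonal contribution occurs exactly when $t^{-1}at\in F_x$ (so $at\otimes v=t\otimes(t^{-1}at)v$), and the delta collapses to $\delta_{y\lhd t,x}$ because $\lhd$ is a genuine right action ($y\lhd(ts)=(y\lhd t)\lhd s$, verified from unique factorization in $Q$) and $s=t^{-1}at$ together with $s^{-1}$ lie in $F_x$, hence fix $x$. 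One point needs straightening out, though it does not damage the argument: your parenthetical identifies the relevant cosets as $F_x t$, but the manipulation you actually use ($at=ts$ with $s\in F_x$, and $\{t\otimes v_j\}$ a basis) requires $T_x$ to be a transversal of the cosets $tF_x$, i.e.\ with the subgroup on the right --- which is what the paper means by ``right cosets,'' as one sees from the use of $b\in tF_x$ in Lemma \ref{stab}. Note that $at\in F_x t$ is equivalent to $a\in F_x$, not to $t^{-1}at\in F_x$, and a transversal of $\{F_x t\}$ need not be a transversal of $\{tF_x\}$, so the two readings are not interchangeable in general; since each summand depends only on the coset $tF_x$ (by the class-function property of $\chi_x$ and the invariance of the delta you checked), the stated formula is unambiguous once this convention is fixed. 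With that clarification your proof is complete and agrees with the computation one would find in \cite{JM}.
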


Next we review some known facts about Frobenius-Schur indicators for representations of Hopf algebras. For a representation $V$ of $H$, recall that a bilinear form $\lan - ,-  \ran : V \ot_{\E} V \to \E$ is 
{\it $H$-invariant} if for all $h \in H$ and $v, w \in V$,
$$\sum \lan h_1\cdot v , h_2\cdot w \ran  = \eps(h) 1_{\E}.$$ 
It follows that the antipode is the adjoint of the form; that is, for all $h, l \in H$, $v, w \in V$,
$$\lan  S(h)\cdot v , l \cdot w \ran  = \lan h\cdot v , \bar{S}(l)\cdot w  \ran = \lan h\cdot v ,S(l)\cdot w  \ran,$$
using that $S^2 = id$. 

\begin{thm} \label{GMind} \cite{GM} Let $H$ be a finite-dimensional Hopf algebra over $\E$ such that $S^2 = id$ and $\E$ splits $H$. Let $V$ be an irreducible representation of $H$. Then $V$ has a well-defined Frobenius-Schur indicator $\nu(V) \in \{ 0, 1, -1\}$. Moreover

(1) $\nu(V) \neq 0 \iff V^* \cong V$. 

(2) $\nu(V) = +1$ (respectively -1) $\iff$ $V$ admits a non-degenerate $H$-invariant 
symmetric (resp, skew-symmetric) bilinear form. 
\end{thm}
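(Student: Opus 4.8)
The plan is to prove Theorem~\ref{GMind} by identifying the Frobenius--Schur indicator of $V$ with the trace of a canonical involution. Recall that $\nu(V)$ is the scalar $\nu(V)=\chi_V(\Lambda^{[2]})$, where $\chi_V$ is the character of $V$, $\Lambda\in H$ is the normalized two-sided integral, and $\Lambda^{[2]}=\sum\Lambda_1\Lambda_2$. A finite-dimensional $H$ has a two-sided integral unique up to scalar (the hypothesis $S^2=\mathrm{id}$ forces $H$ to be unimodular, so a one-sided integral is automatically two-sided), and the normalization pins down the scalar, so $\nu(V)$ is a well-defined element of $\E$; the content of the theorem is that it lies in $\{0,1,-1\}$ and governs the invariant forms on $V$.

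First I would record two standard facts. (i) For any $H$-module $M$, the operator $\rho_M(\Lambda)$ is the projection of $M$ onto the fixed submodule $M^H=\{m : h\cdot m=\eps(h)m\}$, since $\Lambda m$ is always fixed and $\Lambda$ fixes anything already fixed because $\Lambda^2=\Lambda$. Applied to $M=V^*\ot V^*$ with the diagonal action, this exhibits $\rho_{V^*\ot V^*}(\Lambda)$ as the projection onto $(V^*\ot V^*)^H$, which is canonically the space $\mathrm{Bil}_H(V)$ of $H$-invariant bilinear forms on $V$, and also $\cong\mathrm{Hom}_H(V,V^*)$ (using $V^{**}\cong V$, again via $S^2=\mathrm{id}$). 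Since $\E$ splits $H$ and $V,V^*$ are irreducible, Schur's lemma gives $\dim\mathrm{Bil}_H(V)\le 1$, with equality exactly when $V\cong V^*$. (ii) The transpose $B\mapsto B^T$ with $B^T(v,w)=B(w,v)$ is the flip $\tau$ on $V^*\ot V^*$; since the antipode is the adjoint of an invariant form and $S^2=\mathrm{id}$, a one-line computation shows $\tau$ maps $\mathrm{Bil}_H(V)$ into itself, hence restricts to an involution there.

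Next I would establish the key trace identity
$$ \mathrm{tr}_{V^*\ot V^*}\bigl(\tau\circ\rho_{V^*\ot V^*}(\Lambda)\bigr)=\nu(V), $$
by evaluating its two sides independently. On the left, since $\rho_{V^*\ot V^*}(\Lambda)$ is the projection onto $\mathrm{Bil}_H(V)$ and $\tau$ preserves that subspace, the composite is block diagonal with trace $\mathrm{tr}\bigl(\tau|_{\mathrm{Bil}_H(V)}\bigr)$; as $\tau$ is an involution, this is $0$ when $\mathrm{Bil}_H(V)=0$, and otherwise is the scalar $+1$ or $-1$ by which $\tau$ acts on the one-dimensional space $\mathrm{Bil}_H(V)$. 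On the right, writing $\rho_{V^*\ot V^*}(\Lambda)=\sum\rho_{V^*}(\Lambda_1)\ot\rho_{V^*}(\Lambda_2)$ and using the elementary identity $\mathrm{tr}_{W\ot W}\bigl(\tau\circ(A\ot B)\bigr)=\mathrm{tr}_W(AB)$, the trace equals $\sum\chi_{V^*}(\Lambda_1\Lambda_2)=\chi_{V^*}(\Lambda^{[2]})$, which in turn equals $\chi_V(\Lambda^{[2]})=\nu(V)$ because $S(\Lambda)=\Lambda$. Comparing the two evaluations gives $\nu(V)\in\{0,1,-1\}$, together with the sharper statements: $\nu(V)=0\iff\mathrm{Bil}_H(V)=0\iff V\not\cong V^*$, which is (1); and $\nu(V)=+1$ (resp.\ $-1$) $\iff$ a nonzero invariant form $B$ spanning the one-dimensional $\mathrm{Bil}_H(V)$ satisfies $B^T=B$ (resp.\ $B^T=-B$), i.e.\ is symmetric (resp.\ skew-symmetric) --- here the standing hypothesis $p\neq2$ is exactly what keeps these two cases disjoint. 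Non-degeneracy of such a $B$ is automatic, since its radical is an $H$-submodule of the simple module $V$ and $B\neq0$; conversely a non-degenerate invariant symmetric (resp.\ skew) form must span $\mathrm{Bil}_H(V)$, yielding the reverse implications in (2). That completes Theorem~\ref{GMind}.

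The hard part will be fact (i) in the generality of the statement, which does not assume $H$ semisimple. When $H$ is semisimple it is immediate from the normalization $\eps(\Lambda)=1$, and the argument above runs verbatim. For non-semisimple $H$ there is no integral with $\eps(\Lambda)=1$, and one must replace the projection $\rho_M(\Lambda)$ by the trace pairing coming from the Frobenius-algebra structure of $H$ supplied by a two-sided integral on $H^*$ (whose good behaviour again rests on $S^2=\mathrm{id}$); carrying this through is the technical core of \cite{GM}. The remaining ingredients --- Schur's lemma over a splitting field, the identification $\mathrm{Bil}_H(V)\cong\mathrm{Hom}_H(V,V^*)$, and the compatibility of the transpose with invariance under $S^2=\mathrm{id}$ --- are routine.
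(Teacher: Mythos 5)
There is a genuine gap, and you have in fact pointed at it yourself in your last paragraph without closing it. Your entire argument is built on the definition $\nu(V)=\chi_V(\Lambda^{[2]})$ for a \emph{normalized} two-sided integral $\Lambda$, i.e.\ one with $\eps(\Lambda)=1$. By the Hopf-algebraic Maschke theorem such a $\Lambda$ exists if and only if $H$ is semisimple, whereas Theorem~\ref{GMind} assumes only $S^2=\mathrm{id}$ and that $\E$ splits $H$ --- and the paper needs it precisely for the non-semisimple algebras $H_{\k}$ in characteristic $p$ dividing $|F|$. Indeed the paper says so explicitly right after the theorem: the formula $\nu(V)=\chi(\Lambda_1\Lambda_2)$ ``does not work in general,'' and is only valid when $H$ is semisimple and cosemisimple (that special case is the Linchenko--Montgomery theorem, which is essentially what your trace computation reproves). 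Your parenthetical claim that $S^2=\mathrm{id}$ forces unimodularity is also not justified, but it is beside the point: even for unimodular non-semisimple $H$ one has $\eps(\Lambda)=0$, so $\rho_M(\Lambda)$ is not a projection onto $M^H$ and fact~(i) fails. Deferring ``the technical core'' to \cite{GM} is not a proof.

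The fix is to abandon the integral entirely and take the bilinear-form dichotomy as the \emph{definition} of $\nu(V)$; the pieces you need are already present in your fact~(ii) and your Schur's lemma observation. Concretely: $\mathrm{Bil}_H(V)\cong\mathrm{Hom}_H(V,V^*)$ has dimension $0$ or $1$ by Schur's lemma over the splitting field $\E$, and is nonzero iff $V\cong V^*$ (set $\nu(V)=0$ in the zero case); any nonzero invariant form $B$ is automatically non-degenerate since its radical is a proper submodule of the simple module $V$; because $S^2=\mathrm{id}$, the transpose $B^T$ is again $H$-invariant, hence $B^T=\lambda B$ with $\lambda^2=1$, so $\lambda=\pm1$ as $p\neq 2$, and one sets $\nu(V)=\lambda$. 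This yields well-definedness and both (1) and (2) with no reference to $\Lambda$. That is the route of \cite{GM}, and your trace identity, while correct where it applies, should be presented only as the semisimple-cosemisimple refinement the paper attributes to \cite{LM}.
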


If in addition $H$ is semisimple and cosemisimple, then in fact $\nu(V)$ can be computed by 
the formula $\nu(V) = \chi(\La_1 \La_2)$, where $\chi$ is the character belonging to $V$ and 
$\La$ is a normalized integral of $H$ \cite{LM}. This formula does not work in general, but still 
Theorem  \ref{GMind} applies to any bismash product since as noted above, it is always true that
$S^2 = id$. Sometimes the indicator is called the {\it type} of $ V$. 

We remark that, unlike the case for groups, $\nu(V) = +1$ does not imply that the character $\chi_V$ is 
real-valued, even when $\E = \C$. However it is still true that if $V ^* \cong V$, then $\chi^* = \chi$, 
that is, $\chi \circ S = \chi$.

Finally we fix the following notation: 

\begin{df} \label{pmod}  \cite[p 402]{CR}. {\rm  A {\it  $p$-modular system} $(\K, R, \k)$ consists of a discrete valuation ring $R$ with quotient field $\K$, maximal ideal $\fp = \pi R$ containing the rational prime $p$, and residue class field 
$\k = R/ \mathfrak{p}$ of characteristic $p$. }
\end{df}

We will mainly be interested in the following special case, as in \cite{Th} with a slight change in 
notation. 

\begin{ex} \label{mainex} {\rm Let $H_{\Q}$ be a Hopf algebra over $\Q$ and let $\L$ be an 
algebraic number field which is a splitting field for $H_\Q$. Let $\mathcal P$ be a prime ideal of the 
ring of integers of $\L$ containing the rational prime $p\neq 2$, let $R$ be the completion of the ring of 
$\mathcal P$-integers of $\L$, $\K$ be the field of fractions of $R$, $\pi$ be a generator for the 
maximal ideal $\fp$ of $R$, and $\k = R/\pi R$. 

Then $(\K, R, \k)$ is a $p$-modular system.}
\end{ex}

%----------------------------------Section 3. Brauer characters I-------------------------------------------------

\section{Brauer characters for $G$} \label{Brauer}

In this section we review the definition of Brauer characters for a finite group  \cite{CR}, \cite{Nv} 
and summarize some of the classical results. 

We fix the following notation, for a given finite group $G$. 
Let $|G|$ denote the order of $G$, and let $|G|_p$ denote the largest power of $p$ in $|G|$; 
thus $|G| = |G|_p m$ where $p \nmid m$.  Since $\K$ splits $G$, it contains a primitive $m^{th}$ 
root of 1, say $\om$, which in fact is in $R$. Under the natural map $f: R \to \k$, 
$\bar \om = f(\om)$ is a primitive $m^{th}$ root of 1 in $\k$.

Let $G_{p'}$ denote the set of {\it $p$-regular elements} of $G$, that is elements of $G$ whose order is 
prime to $p$. Thus for each $x \in G_{p'}$, all of the eigenvalues of $x$ on any (left) 
$\k G$-module $W$ are $m^{th}$ roots of 1, and hence may be expressed as a power of 
$\bar \om$. Denote the eigenvalues of $x$ by $\{ \bar \om^{i_1}, \ldots,  \bar \om^{i_t} \}.$

\begin{df} \label{Bchar} {\rm For each (left) $\k G$-module $W$, the $\K$-valued function 
$\phi: G_{p'} \to \K$ defined for 
each $x \in G_{p'}$ by
$$\phi(x) =  \om^{i_1}  + \cdots + \om^{i_t} = \sum_{j=1}^t  f^{-1} ( \bar{\om}^{i_j}).$$
is called the {\it Brauer character of $G$ afforded by $W$}.}
\end{df}

\begin{remark} \label{lincom} {\rm Note that $\phi$ is a class function on the conjugacy classes of $p$-regular elements of $G$. $\phi$ can be extended to $\phi^{\#}$, a class function on all of $G$, by defining $\phi^{\#}(x) =0$ 
for any $x$ in the complement of $G_{p'}$. It follows that $\phi^{\#}$ is a $\K$-linear combination of the ordinary 
irreducible characters $\chi_i$ of $\K G$ \cite[p 423]{CR}. Thus $\phi$ is a $\K$-linear combination of the $\chi_i |_{G_{p'}}.$} 
\end{remark}

We record some facts about Brauer characters of groups. See \cite[17.5]{CR}, \cite[Chapter 15]{I}.

\begin{prop}\label{CR17.5}
\begin{enumerate}
 \item
$\la$ takes values in $R$ and $\overline{\la(x)}=Tr(x, V),$ all $ x\in G_{p'}.$

\item
Let $W_0\supset W_1\supset 0$ be $\k G$-modules, let $\phi$ be the Brauer character afforded by $\ds{W_0 / W_1},\  \phi_1$ the Brauer character afforded by $W_1 $ and $\phi_0$ the Brauer character of 
$W_0$. Then $\phi_0=\phi + \phi_1.$

\item
Let $V$ be a $\K G$-module with $\K$-character $\chi$. Then for each $RG$-lattice $M$ in $V$, 
the restriction $\chi |_{G_{p'}}$ is the Brauer character of the $kG$-module $\overline{ M} := M/ \fp M$.

\end{enumerate}
\end{prop}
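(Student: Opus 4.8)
The plan is to dispatch the three assertions in order, noting that (1) and (2) are essentially immediate from Definition~\ref{Bchar}, while (3) --- the actual reduction-mod-$\fp$ statement --- requires decomposing the lattice $M$ into eigenspaces for the cyclic group generated by a $p$-regular element.

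For (1): if $\lambda$ is the Brauer character afforded by the $\k G$-module $V$ and $x\in G_{p'}$ has eigenvalues $\bar\om^{i_1},\dots,\bar\om^{i_t}$ on $V$, then by definition $\lambda(x)=\sum_{j}\om^{i_j}$; since $\om\in R$ this value lies in $R$, and applying $f$ gives $\overline{\lambda(x)}=\sum_j\bar\om^{i_j}=\mathrm{Tr}(x,V)$ because the $\bar\om^{i_j}$ are by construction the eigenvalues of $x$ on $V$. For (2): fix $x\in G_{p'}$, choose a $\k$-basis of $W_1$ and extend it to a basis of $W_0$; the matrix of $x$ on $W_0$ is then block upper-triangular with diagonal blocks the matrices of $x$ on $W_1$ and on $W_0/W_1$, so the multiset of eigenvalues of $x$ on $W_0$ is the disjoint union of those on $W_1$ and on $W_0/W_1$. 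Since each Brauer character value is the sum of the canonical lifts $f^{-1}$ of these eigenvalues, the additivity $\phi_0(x)=\phi(x)+\phi_1(x)$ follows.

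For (3): let $x\in G_{p'}$ have order $n$. Since $p\nmid n$ and $n\mid |G|$ we have $n\mid m$, so $n$ is a unit in $R$ and $\zeta:=\om^{m/n}$ is a primitive $n$-th root of unity in $R$. Form the orthogonal idempotents $e_j=\frac1n\sum_{k=0}^{n-1}\zeta^{-jk}x^k\in R\langle x\rangle$ for $0\le j\le n-1$; they sum to $1$ and satisfy $xe_j=\zeta^j e_j$. Then $M=\bigoplus_j e_jM$ as $R\langle x\rangle$-modules, and since $M$ is an $RG$-lattice it is $R$-free, hence so is each $e_jM$, of some rank $r_j$ with $\sum_j r_j=\dim_\K V$. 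Extending scalars to $\K$, $x$ acts on $\K\ot_R e_jM$ as the scalar $\zeta^j$, so $\chi(x)=\sum_j r_j\zeta^j$. Reducing mod $\fp$, we get $\overline M=\bigoplus_j \overline{e_jM}$ with $x$ acting as $\bar\zeta^j$ on $\overline{e_jM}$, a space of $\k$-dimension $r_j$; hence the Brauer character of $\overline M$ at $x$ equals $\sum_j r_j\,f^{-1}(\bar\zeta^j)=\sum_j r_j\zeta^j=\chi(x)$, which is exactly the claim $\chi|_{G_{p'}}=$ (Brauer character of $\overline M$).

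None of this is deep, but the step demanding care is (3): one must check that the idempotents $e_j$ genuinely lie in $R\langle x\rangle$ (which needs $n$ invertible in $R$, i.e. $p\nmid n$), that each $e_jM$ is $R$-free so that its rank equals $\dim_\k\overline{e_jM}$, and --- the crucial point tying the two eigenvalue computations together --- that the canonical lift $f^{-1}(\bar\zeta^j)$ appearing in the Brauer character of $\overline M$ is literally the characteristic-$0$ eigenvalue $\zeta^j=\om^{jm/n}$, which holds because reduction mod $\fp$ is injective on the group of $m$-th roots of unity in $R$.
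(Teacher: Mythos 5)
Your proof is correct, and since the paper offers no argument of its own for this proposition --- it is quoted from \cite[17.5]{CR} --- the right comparison is with that standard reference, whose proof of part (3) proceeds exactly as yours does: restrict to the cyclic group $\langle x\rangle$, use that $|\langle x\rangle|$ is invertible in $R$ to split the lattice $M$ into eigen-sublattices, and invoke the injectivity of reduction mod $\fp$ on the $m$-th roots of unity to identify the lifted eigenvalues with the characteristic-zero ones. Your treatment of (1) and (2) is likewise the standard one, so nothing further is needed.
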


We fix the following notation, as in \cite{CR}:

\noindent (1) $Irr(G) =\{\chi_1, \ldots, \chi_n\}$ denotes the irreducible characters of $\K G$;

\noindent (2) $Irr_{\k}(G) = \{\psi_1, \ldots, \psi_d\}$ denotes the irreducible characters of $\k G$;

\noindent (3) $IBr(G) = \{\phi_1, \ldots, \phi_d\}$ denotes the Brauer characters corresponding to 
$\{\psi_1, \ldots, \psi_d\}$.  

\vs

By \cite[16.7 and 16.20]{CR}, there exists a homomorphism of abelian groups
\begin{equation} \num \label{demap}  d: G_0(\K G) \to G_0(\k G),
\end{equation}
called the {\it decomposition map}, such that for any class $[V]$ in $G_0(\K G)$, $d([V]) = [\overline{M}]  \in 
 G_0(\k G),$ where $M$ is any $RG$-lattice in $V$ and $\overline{ M} := M/ \fp M$. 
 
Using Proposition \ref{CR17.5}(3), it follows that for any $\chi_i$, there are integers $d_{ij}$ such that 
\begin{equation} \num \label{decomp} \chi_i |_{G_{p'}} = \sum_j d_{ij} \phi_j.
\end{equation}
The mulitplicities $d_{ij} = d(\chi_i|_{G_{p'}} , \phi_j)$ are called  {\it decomposition numbers}, and the matrix 
$D = [ d_{ij}]$ is called the {\it decomposition matrix}.  The matrix $C = D^t D$ is called the {\it Cartan matrix}.

From \cite{CR}, 17.12 - 17.15, the set $Bch(\k G)$ of {\it virtual Brauer characters}, 
that is $\Z$-linear combinations of Brauer characters of $\k G$-modules, is a ring under addition and multiplication of functions, and $Bch(\k G) \cong G_0(\k G).$ Using that 
$G_0(\K G) \cong ch(\K G)$, the ring of virtual characters of $\K G$, it follows that the decomposition map $d$ induces a map 
$$d': ch(\K G) \to Bch(\k G), $$ 
where $d'$ is the restriction map $\psi \to \psi |_{G_{p'}} $.

Consequently Equation (\ref{decomp}) implies that if $\chi$ is the character for $V$ and 
$ \chi |_{G_{p'}} = \sum_j \alpha_j \phi_j$, where $\phi_j$ is the Brauer character of the simple 
$\k G$-module $W_j$,  and $d([V]) = [\overline{M}]  \in G_0(\k G),$ then 
\begin{equation} \label{decompmodp} [\overline{M}]  =  \sum_j \alpha_j [W_j].
\end{equation}

We will need the following theorem in our more general situation:

\begin{thm} (Brauer) \label{brauer} \cite[18.25]{CR}\cite[Ex (15.3)]{I} $Det(C)$ is a power of $p$. 
\end{thm}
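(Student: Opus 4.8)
The plan is to realize $C$ as the matrix of the \emph{Cartan homomorphism} and to show that its cokernel is a finite $p$-group, whose order is then $\det C$.

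First note that $\det C$ is a positive integer. Indeed, the irreducible Brauer characters $\phi_1,\dots,\phi_d$ are $\K$-linearly independent while, by Remark~\ref{lincom} and (\ref{decomp}), the $\chi_i|_{G_{p'}}$ span the same $\K$-space; hence the integer matrix $D=[d_{ij}]$ of (\ref{decomp}) has rank $d$, so $C=D^tD$ is positive definite. Next, recall the Grothendieck group $K_0(\k G)$ of finitely generated projective $\k G$-modules, which is free abelian on the classes $[P_1],\dots,[P_d]$ of the projective covers of the simple modules $W_1,\dots,W_d$; the Cartan homomorphism is $c\colon K_0(\k G)\to G_0(\k G)$, $[P_i]\mapsto\sum_j c_{ij}[W_j]$, and its matrix is $C$ (which is symmetric). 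The key structural point is that $c$ factors as $c=d\circ e$, where $d$ is the decomposition map (\ref{demap}) and $e\colon K_0(\k G)\to G_0(\K G)$ sends $[P_i]$ to $[\K\ot_R\widehat{P_i}]$ for any $RG$-lattice $\widehat{P_i}$ reducing to $P_i$ modulo $\fp$: such a lift exists and is unique up to isomorphism because $R$ is complete, so idempotents lift along $RG\to\k G$, and $d\circ e=c$ is then just the compatibility of the two reductions mod $\fp$. Since $\det C\neq 0$, $c\ot\Q$ is an isomorphism, so $\mathrm{coker}(c)$ is finite of order $|\det C|$. It therefore suffices to prove that $\mathrm{coker}(c)$ is a $p$-group, and for this it is enough to see that $|G|_p\cdot[W_j]\in\mathrm{im}(c)$ for every simple $\k G$-module $W_j$.

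This last divisibility is the substance of Brauer's theorem and the one step that is not purely formal; I expect it to be the main obstacle. The mechanism is relative projectivity with respect to a Sylow $p$-subgroup $P\le G$ (so $|P|=|G|_p$): since $[G:P]$ is invertible in $\k$, every $\k G$-module is a direct summand of a module induced from $P$, and since $\k P$ is local its only simple module is the trivial one while every projective $\k P$-module is free, so $|P|$ divides $\dim_\k$ of every projective $\k G$-module. Feeding these facts — together with the projection formula for $\mathrm{Ind}_P^G$, $\mathrm{Res}_P^G$ and the syzygy relation $[W_j]=[P_j]-[\Omega W_j]$ in $G_0(\k G)$, where $\Omega W_j$ is the kernel of the projective cover — through the triangle $K_0(\k G)\xrightarrow{e}G_0(\K G)\xrightarrow{d}G_0(\k G)$ yields that $|G|_p$ annihilates $\mathrm{coker}(c)$; equivalently, the principal indecomposable characters $\Phi_i=\sum_k d_{ki}\chi_k$ carry enough $p$-power divisibility on the $p$-regular classes (governed by the defects of their blocks) to force $|\det C|$ to be a power of $p$. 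This bookkeeping is precisely what is carried out in \cite[18.25]{CR} and \cite[Ex.~(15.3)]{I}. Granting it, $\mathrm{coker}(c)$ is a finite $p$-group of order $|\det C|$, so $\det C$ is a power of $p$, as claimed.
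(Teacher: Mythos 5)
The paper itself gives no proof of this statement: it is quoted as Brauer's classical theorem, with the proof delegated to \cite[18.25]{CR} and \cite[Ex (15.3)]{I}. Your surrounding framework is correct as far as it goes: $\det C$ is a positive integer, since the $\phi_j$ are $\K$-linearly independent (Theorem \ref{indep}) and each $\phi_j$ lies in the $\K$-span of the $\chi_i|_{G_{p'}}$ (Remark \ref{lincom}), so $D$ has rank $d$ and $C=D^tD$ is positive definite; the Cartan map $c$ between free abelian groups of rank $d$ with $\det C\neq 0$ has finite cokernel of order $\det C$; the factorization $c=d\circ e$ is legitimate because $R$ is complete (Example \ref{mainex}), so idempotents lift along $RG\to\k G$; and the theorem would indeed follow once one knows $|G|_p\cdot[W_j]\in\mathrm{im}(c)$ for every $j$.

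But that divisibility \emph{is} Brauer's theorem --- it is equivalent to $|G|_p\,C^{-1}$ being an integer matrix --- and you do not prove it: you gesture at relative projectivity with respect to a Sylow $p$-subgroup $P$ and then write that the bookkeeping ``is precisely what is carried out in'' \cite[18.25]{CR} and \cite[Ex (15.3)]{I}, i.e.\ you cite for the decisive step exactly the sources the paper cites for the whole statement. Moreover, the sketch as written does not close: the facts you list (every $\k G$-module is a summand of $\mathrm{Ind}_P^G\mathrm{Res}_P^G W$, $\k P$ is local, $|P|$ divides $\dim_{\k}$ of every projective) do not combine, via the projection formula and the relation $[W_j]=[P_j]-[\Omega W_j]$, into a proof that $|G|_p[W_j]\in\mathrm{im}(c)$; note in particular that $\mathrm{Ind}_P^G N$ is projective only when $N$ is projective over $\k P$, so inducing from the Sylow subgroup does not by itself land you in the image of $c$. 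The arguments that actually work are different: either the orthogonality relations between the irreducible Brauer characters and the characters $\Phi_i$ of the principal indecomposable modules (the route of \cite{CR} and \cite{I}), or induction from $p'$-subgroups $H$ (where $\k H$ is semisimple, so all induced modules are projective) combined with a Brauer-induction argument, as in the classical cde-triangle treatment. So, judged as a self-contained proof, there is a genuine gap at the one step that carries the content of the theorem; judged as a reduction plus citation, it is no more than what the paper already does by quoting the result.
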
 

We will also need the analog of the following:

\begin{thm} \cite[(17.9)]{CR} \label{indep}The irreducible Brauer characters $IBr(G)$ form a $\K$-basis of the space of $\K$-valued class functions of $G_{p'}.$
\end{thm}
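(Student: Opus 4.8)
The plan is a dimension count. I will show that $IBr(G)$ spans the space $\mathrm{CF}_{\K}(G_{p'})$ of $\K$-valued class functions on $G_{p'}$, and that its cardinality $d=|IBr(G)|$ (the number of simple $\k G$-modules) equals $r$, the number of $p$-regular conjugacy classes of $G$. Since every conjugacy class of $G$ is either wholly $p$-regular or wholly $p$-singular, $r=\dim_{\K}\mathrm{CF}_{\K}(G_{p'})$, so a spanning set of size $r$ is automatically a basis.

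\emph{Spanning.} By ordinary character theory $Irr(G)$ is a $\K$-basis of $\mathrm{CF}_{\K}(G)$, and the restriction map $\mathrm{CF}_{\K}(G)\to\mathrm{CF}_{\K}(G_{p'})$ is surjective (extend a class function on $G_{p'}$ by $0$ on the $p$-singular classes). Hence the functions $\chi_i|_{G_{p'}}$ span $\mathrm{CF}_{\K}(G_{p'})$. By (\ref{decomp}) each $\chi_i|_{G_{p'}}$ is a $\Z$-combination of the $\phi_j\in IBr(G)$, so $IBr(G)$ spans $\mathrm{CF}_{\K}(G_{p'})$; in particular $d\geq r$.

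\emph{The count $d=r$.} Here I would work inside $A=\k G$. Since $\k$ is algebraically closed, $A/J(A)\cong\prod_{i=1}^{d}M_{n_i}(\k)$, where $J(A)$ is the Jacobson radical; and because the commutator subspace $[M_n(\k),M_n(\k)]$ is the codimension-one space of trace-zero matrices in every characteristic, $d$ equals $\dim_{\k}A/([A,A]+J(A))$, where $[A,A]$ is the $\k$-span of the elements $ab-ba$. Now $A/[A,A]$ has the images of a set of conjugacy-class representatives as a $\k$-basis, so $\dim_{\k}A/[A,A]=s$, the total number of classes. The key point is that in characteristic $p$ the Frobenius $a\mapsto a^{p}$ is additive modulo $[A,A]$, so it induces maps on $A/[A,A]$ (Frobenius-semilinear, but over the perfect field $\k$ their rank and nullity still add up to $\dim$) whose $n$-th iterate sends the image of $g\in G$ to the image of $g^{p^{n}}$; and for $n$ large, $g^{p^{n}}$ is the $p'$-part of $g$. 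Thus for $n\gg 0$ this iterate fixes the basis vectors indexed by $p$-regular classes and sends each remaining one into their span, so its image is the $r$-dimensional span of the $p$-regular class representatives and its kernel has dimension $s-r$. Finally one checks that this kernel is precisely $([A,A]+J(A))/[A,A]$ --- the inclusion $\subseteq$ using $A/J(A)\cong\prod M_{n_i}(\k)$ together with $\mathrm{tr}(x^{p})=\mathrm{tr}(x)^{p}$, and $\supseteq$ using that $J(A)$ is nilpotent --- so $d=\dim_{\k}A/([A,A]+J(A))=s-(s-r)=r$.

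Combining the two steps, $IBr(G)$ spans $\mathrm{CF}_{\K}(G_{p'})$ and has $d=r=\dim_{\K}\mathrm{CF}_{\K}(G_{p'})$ elements, hence is a $\K$-basis. The main obstacle is the count $d=r$: pinning down the image and kernel of the iterated Frobenius on $A/[A,A]$, and in particular verifying that $[A,A]+J(A)$ is exactly that kernel. The spanning step is routine, and nothing delicate happens in passing between $\k$ and $\K$ once the two dimensions are known to agree; alternatively, one could simply cite $d=r$ as a standard fact of modular representation theory due to Brauer.
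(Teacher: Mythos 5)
Your argument is correct, but it is not the route the paper (via \cite[(17.9)]{CR}) takes. The cited proof establishes linear independence of $IBr(G)$ \emph{directly}: assuming a nontrivial $R$-linear relation, one reduces mod $\fp$, uses Proposition \ref{CR17.5}(1) to replace Brauer values by traces on the simple $\k G$-modules, and then uses Lemma \ref{factor} (an element $x=us$ and its $p'$-part $s$ have the same eigenvalues) to extend the vanishing relation from $G_{p'}$ to all of $G$, contradicting the linear independence of the irreducible $\k$-characters coming from the Wedderburn quotient of $\k G$; spanning is the same easy step you give, and the equality ``number of simple $\k G$-modules $=$ number of $p$-regular classes'' is then a \emph{corollary}. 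You invert this logic: you take Brauer's count $d=r$ as the main input, proving it by the classical argument with $[A,A]+J(A)$ and the iterated Frobenius on $A/[A,A]$, and then get the basis statement by dimension count from the (shared) spanning step. Both are sound; your kernel identification (trace of $x^{p}$ equals $\mathrm{tr}(x)^{p}$ for the inclusion one way, nilpotency of $J(A)$ and stability of $[A,A]$ under $p$-th powers for the other) is the standard and correct way to nail the count, though note that ``additive mod $[A,A]$'' alone does not give well-definedness of the induced map --- you also need $t^{p}\in[A,A]$ for $t\in[A,A]$, which follows from $(xy)^{p}\equiv(yx)^{p}$ mod commutators. The trade-off worth noticing in the context of this paper: the authors specifically need the \emph{first} part of the CR proof (independence via Lemma \ref{factor}), because that is what transfers to the bismash product, with Lemma \ref{Hfactor} in place of Lemma \ref{factor}, to prove Theorem \ref{Hindep}; your dimension-count route depends on a count of simples versus ``$p$-regular classes'' that has no ready-made analogue on $\CB_{p'}$, so while it is a perfectly good (and arguably more self-contained) proof for groups, it would not serve as the template for the Hopf-algebra generalization the paper is after.
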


One consequence of this theorem is:

\begin{cor} Let $\E$ be a splitting field for $G$ of char $p>0$.  Then the number of 
simple $\E G$-modules is equal to the number of $p$-regular conjugacy classes of $G$.
\end{cor}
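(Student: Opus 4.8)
The plan is to deduce the corollary directly from Theorem \ref{indep}. The key observation is that both sides of the claimed equality are dimensions of the same $\K$-vector space, viewed in two ways.

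First I would recall that $\E$ being a splitting field of characteristic $p$ means $\E G$ decomposes as a direct sum of matrix algebras over $\E$, one for each simple $\E G$-module, so the number of simple $\E G$-modules equals $d := \dim_{\E} Z(\E G / J(\E G))$; more to the point, the irreducible Brauer characters $IBr(G) = \{\phi_1, \dots, \phi_d\}$ are indexed by the simple $\E G$-modules, so the number of simple $\E G$-modules is exactly $|IBr(G)| = d$. (One should note the mild point that the field $\E$ here may differ from the residue field $\k$ of the chosen $p$-modular system, but since $\k$ is algebraically closed — or at least a splitting field — and Brauer characters are insensitive to enlarging an already-splitting field, the count is the same; this is standard and I would cite \cite{CR} for it.) Then by Theorem \ref{indep}, $\{\phi_1, \dots, \phi_d\}$ is a $\K$-basis of the space $\mathrm{CF}(G_{p'})$ of $\K$-valued class functions on the $p$-regular elements of $G$. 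Hence $d = \dim_{\K} \mathrm{CF}(G_{p'})$.

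Next I would compute $\dim_{\K} \mathrm{CF}(G_{p'})$ by elementary means: a $\K$-valued function on $G_{p'}$ that is constant on $G$-conjugacy classes is determined freely by its values on a set of representatives of the $p$-regular conjugacy classes, so $\dim_{\K} \mathrm{CF}(G_{p'})$ equals the number of $p$-regular conjugacy classes of $G$. Combining, the number of simple $\E G$-modules equals $d$ equals the number of $p$-regular conjugacy classes, which is the assertion.

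There is no real obstacle here; the corollary is essentially a restatement of Theorem \ref{indep} together with the (dimension-counting) description of $\mathrm{CF}(G_{p'})$. The only point requiring a word of care is the identification of "simple $\E G$-modules" with "$IBr(G)$" when $\E$ is merely assumed to be \emph{a} splitting field of characteristic $p$ rather than the specific residue field $\k$; I would handle this by invoking the fact that the number of simple modules over a split semisimple-mod-radical algebra is a field-independent invariant once the field splits, so we may as well take $\E = \k$ as in the setup of Section \ref{Brauer}.
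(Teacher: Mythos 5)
Your proof is correct and is exactly the argument the paper has in mind: the corollary is stated there as an immediate consequence of Theorem \ref{indep}, with the number of simple modules counted by $|IBr(G)|$ and the dimension of the space of $\K$-valued class functions on $G_{p'}$ counted by the number of $p$-regular classes. Your extra remark on the independence of the count from the choice of splitting field $\E$ versus the residue field $\k$ is a reasonable (standard) point of care and does not change the argument.
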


A crucial ingredient of the proof of the theorem  is the following elementary lemma.

\begin{lem} \label{factor} Let $\rho: G \to GL_n(\k)$ be a matrix representation of $G$ over $\k$. 
For any $x \in G$, we may write $x = us$, where $u$ is a $p$-element of $G$ and $s$ is a 
$p'$-element. Then $x$ and $s$ have the same eigenvalues, counting multiplicities. \end{lem}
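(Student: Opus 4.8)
The plan is to exploit that a $p$-element and a $p'$-element which commute have coprime orders, so the Jordan structure of $\rho(x)$ is governed by $\rho(s)$ alone. First I would note that since $u$ is a $p$-element, its order is $p^e$ for some $e$, and since $s$ is a $p'$-element, $\rho(s)$ is diagonalizable over $\k$ (its minimal polynomial divides $Z^{|s|}-1$, which is separable as $p \nmid |s|$). Because $x=us$ with $u,s$ commuting powers of $x$, the matrices $\rho(u)$ and $\rho(s)$ commute, hence are simultaneously triangularizable over $\k$ (which is algebraically closed). So I may assume $\rho(u)$ and $\rho(s)$ are both upper triangular, and then $\rho(x)=\rho(u)\rho(s)$ is upper triangular with diagonal entries the products of the corresponding diagonal entries of $\rho(u)$ and $\rho(s)$.

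The key step is that every diagonal entry of $\rho(u)$ equals $1$. Indeed the eigenvalues of $\rho(u)$ are $p^e$-th roots of unity in $\k$, and since $\mathrm{char}\,\k = p$ the polynomial $Z^{p^e}-1 = (Z-1)^{p^e}$ has $1$ as its only root; thus $\rho(u)$ is unipotent and its diagonal entries (in the triangular form) are all $1$. Therefore the diagonal of $\rho(x)$ coincides with the diagonal of $\rho(s)$, i.e. the eigenvalues of $\rho(x)$ and $\rho(s)$ agree with multiplicities.

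The only thing requiring a word of care — and the main (very mild) obstacle — is the passage from "commuting" to "simultaneously triangularizable," together with making the multiplicity bookkeeping precise: one should observe that $u$ and $s$ are each polynomials in $x$ (being powers of $x$, using that $|u|$ and $|s|$ are coprime so suitable powers of $x$ isolate each factor), which immediately gives commutativity of $\rho(u)$ and $\rho(s)$ and lets one put them in a common triangular form. After that the argument is the short computation above. I would also remark that the decomposition $x=us$ itself is the standard Jordan decomposition in a finite group: write $|x| = p^a m$ with $p\nmid m$, choose integers $r,t$ with $rp^a + tm = 1$, and set $s = x^{rp^a}$, $u = x^{tm}$; then $s$ is a $p'$-element, $u$ is a $p$-element, they commute, and $x = us$.
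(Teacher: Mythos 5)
Your proposal is correct and follows essentially the same route as the paper, which simply notes that $su=us$ and that all eigenvalues of $u$ equal $1$; you have merely supplied the details (the coprimality argument giving $u,s$ as powers of $x$, simultaneous triangularization, and unipotence of $\rho(u)$ since $Z^{p^e}-1=(Z-1)^{p^e}$ in characteristic $p$) that the paper leaves implicit.
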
 
 
The lemma follows since $su = us$, and all eigenvalues of $u$ will equal 1.

%===========================================================================

%-------------------------------------Section 4: Brauer II--------------------------

\section{Brauer characters for $H_{\k}$} \label{Hbrauer}

In this section we define Brauer characters for our bismash products and show that they have properties 
analogous to those for finite groups discussed in Section \ref{Brauer}. 

Assume that $\L, \K, \pi, R$ and $\k$ are as Example \ref{mainex}, with $\k= R/ \pi R$.

Fix an irreducible $H_{\L}$-module $V_{\L}$ whose indicator is non-zero. 
Since $\L$ is a splitting field for $H_{\Q}$, so is $\K$, and thus 
$$V := V_{\L} \ot _{\L} \K$$ 
is an irreducible $H_\K$-module. Moreover the bilinear form on $V_{\L}$ extends to a bilinear form on 
$V$, and 
thus there is a non-singular $H_{\K}$-invariant bilinear form 
$\lan \ ,\  \ran$  on $V$, with values in $\K$, which is symmetric or skew-symmetric by \ref{GMind}.	

Recall the basis $\CB$ of $H_{\K}$ from Section 2. 

\begin{df} \label{lat}{\rm Let $V = V_{\L} \ot _{\L} \K$  be as above. Then an 
$R \CB$-{\it lattice} in $V$ is a finitely generated $R \CB$-submodule $L$ of $V$ such that 
$\K L = V$.}
\end{df} 

From now on we also assume that $\L$  denotes an algebraic number field which is a splitting field for $H_{\Q} = \Q^G\# \Q F$. Then $\k$ is a splitting field for $H_{\k}$.

Let $\hat{W}=\hat{W}_x$ be a simple $H_{\k}$ module, as in Proposition \ref{mod}. That is, for a given $F$-orbit 
$\CO$ of $G$ and fixed $ x\in \CO = \CO_x$, with $F_x$ the stabilizer of $x$ in $F$ and $W = W_x$ a simple $\k F_x$-module, 
$\hat{W} = \k F \ot_{\k F_x} W.$
Recall $\hat{W}$ becomes an $H$-module via
$$(p_y\#a)[b\ot w]=\delta_{y\lhd(ab), x} [ab\ot w],$$
 for any $y\in G$, $a,b\in F$, and $w\in W$. 
 
 As in Lemma \ref{lemma3}, fix a set $T_x$ of representatives of the right cosets of 
$F_x$ in $F$.

\begin{lem} \label{stab} Consider the action of $p_y\#a$ on $\hat W = \hat{W}_x$ as above. 

(1) If $(p_y\#a)\hat{W} \neq 0$, then there exists  $t \in T_x$ and $w \in W$ such that 
$$(p_y\#a)[t\ot w]=\delta_{y\lhd(at), x} [at\ot w] \neq 0.$$
Thus $y  = x \lhd(at)^{-1} \in \CO_x.$ 

(2) If $p_y\#a$ has non-zero eigenvalues on $\hat{W}_x$, then $a \in F_y$ and $x = y \lhd t,$ where 
$t$ is as in (1). 

(3) For $t$ as in (1) and (2), $t^{-1} a t \in F_x$ and so $at \ot w = t \ot (t^{-1} a t)w.$

\end{lem}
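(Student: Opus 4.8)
The plan is to read everything off the explicit action formula $(p_y\#a)[b\ot w]=\delta_{y\lhd(ab),x}[ab\ot w]$ on $\hat W=\hat W_x=\k F\ot_{\k F_x}W$, together with the power formula (\ref{power}) and the fact that $\lhd$ is a right action of $F$ on $G$, so that $g\lhd(ab)=(g\lhd a)\lhd b$ for all $g\in G$ and $a,b\in F$ (immediate from associativity in $Q$ and uniqueness of the factorization). The one structural remark I would record at the outset is that the elements $t\ot w$ with $t\in T_x$ and $w$ in a $\k$-basis of $W$ span $\hat W$, and that $at\ot w\neq 0$ whenever $w\neq 0$, since $\k F$ is free as a $\k F_x$-module.

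For (1), if $(p_y\#a)\hat W\neq 0$ then $(p_y\#a)[t\ot w]=\delta_{y\lhd(at),x}[at\ot w]$ must be non-zero for one of these spanning elements, which forces $\delta_{y\lhd(at),x}=1$, that is $y\lhd(at)=x$. Applying $(at)^{-1}$ and using that $\lhd$ is a right action then gives $y=x\lhd(at)^{-1}$, which lies in $\CO_x$ because $\CO_x$ is the $F$-orbit of $x$ under $\lhd$.

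For (2), I would first observe that a non-zero eigenvalue forces $p_y\#a$ to act non-nilpotently on $\hat W$, hence $(p_y\#a)^2\neq 0$ on $\hat W$; but (\ref{power}) shows $(p_y\#a)^2=0$ already in $H$ as soon as $a\notin F_y$, and an identity valid in $H$ is valid on every $H$-module, so necessarily $a\in F_y$, i.e.\ $y\lhd a=y$. A non-zero eigenvalue also gives $(p_y\#a)\hat W\neq 0$, so part (1) produces $t\in T_x$ with $y\lhd(at)=x$; since $y\lhd(at)=(y\lhd a)\lhd t=y\lhd t$, this is exactly $x=y\lhd t$. For (3), with $x=y\lhd t$ and $y\lhd a=y$ in hand, the direct computation $x\lhd(t^{-1}at)=(y\lhd t)\lhd(t^{-1}at)=y\lhd(at)=(y\lhd a)\lhd t=y\lhd t=x$ shows $t^{-1}at\in F_x$; then, since the tensor product is balanced over $\k F_x$, one may slide this element across to obtain $at\ot w=t(t^{-1}at)\ot w=t\ot(t^{-1}at)w$.

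I do not expect a genuine obstacle here: the lemma is purely the interaction of the module action formula with (\ref{power}). The only points requiring a little care are keeping the composition order $g\lhd(ab)=(g\lhd a)\lhd b$ for the right action consistent throughout, and, in part (2), invoking (\ref{power}) at the level of operators on $\hat W$ rather than merely at the level of elements of $H$.
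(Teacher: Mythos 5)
Your proposal is correct and follows essentially the same route as the paper: use the explicit action formula on the spanning elements $t\ot w$ to get (1), use (\ref{power}) together with the non-nilpotence forced by a non-zero eigenvalue to get $a\in F_y$ and hence $x=y\lhd t$ in (2), and use the right-action identity plus the balanced tensor product for (3). The only differences are cosmetic — you argue directly on the spanning set $\{t\ot w : t\in T_x\}$ where the paper first takes an arbitrary $b\in F$ and then replaces it by its coset representative, and you spell out the computation $x\lhd(t^{-1}at)=x$ that the paper merely asserts.
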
 

\begin{proof}

(1) By the formula for the action of $p_y \#a$ on $\hat W$, there exists $b \in F$ and $w \in W$ such that $(p_y\#a)[b\ot w]=\delta_{y\lhd(ab), x} [ab\ot w] \neq 0.$ Thus $y  = x \lhd(ab)^{-1} \in \CO_x.$ Now for 
some $t \in T_x$, $b \in tF_x$. It is easy to see that $t$ satisfies the same properties as $b$. 

(2)  If $p_y\#a$ has non-zero eigenvalues on $\hat{W}$, then $(p_y\#a)^2 \neq 0$, 
and so $a \in F_y$ by (\ref{power}). Now using (1), 
$x  = y \lhd(at) = (y \lhd a)\lhd t = y\lhd t.$ 

(3) Since $ y = x \lhd t^{-1}$ and $a \in F_y$, it follows that  $t^{-1} a t \in F_x$. Thus we can 
write $at\otimes w = t \otimes (t^{-1}at)w$.
\end{proof}

We next prove an analog of Lemma \ref{factor}, although in our case the two factors do not 
necessarily commute in $H_{\k}.$

\begin{lem} \label{Hfactor}
Consider $\rho: H_{\k} \to End_k(\hat W) \cong M_n(\k)$. For $a\in F$ write $a=su,$ with $s$ the $p$-regular part and $u$ the $p$-part of $a.$  Then $\rho(p_y\#a)$ and $\rho(p_y\#s)$ have the same eigenvalues, counting multiplicities. 
\end{lem}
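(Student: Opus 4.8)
The plan is to reduce the computation to an honest group-algebra statement inside the stabilizer $F_x$, where the classical Lemma \ref{factor} applies. First I would dispose of the trivial case: if $\rho(p_y\#a)$ has only zero eigenvalues, I must check that $\rho(p_y\#s)$ does too. By (\ref{power}), $\rho(p_y\#a)$ is non-nilpotent exactly when $a\in F_y$; since $s$ is a power of $a$ (the $p$-regular part lies in $\langle a\rangle$), $a\in F_y$ iff $s\in F_y$, so the nilpotent cases match and both operators have all eigenvalues $0$. Hence we may assume $a\in F_y$, so $p_y\#s$ and $p_y\#a$ are both non-nilpotent and in $\CB'$.

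Next I would use Lemma \ref{stab} to restrict attention to the coset representatives $t\in T_x$ that actually contribute. Decompose $\hat W=\bigoplus_{t\in T_x} t\otimes W$ as a $\k$-vector space. The operator $\rho(p_y\#a)$ sends $t\otimes w$ to $\delta_{y\lhd(at),x}\,[at\otimes w]$; by Lemma \ref{stab}(1) this is nonzero only when $y\lhd t=x$ (using $a\in F_y$), and for such $t$ Lemma \ref{stab}(3) gives $at\otimes w = t\otimes (t^{-1}at)w$, so $p_y\#a$ preserves the subspace $t\otimes W$ and acts there as left multiplication by the group element $t^{-1}at\in F_x$. On the $t\otimes W$ summands with $y\lhd t\neq x$, the operator is zero. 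The same analysis applies verbatim to $p_y\#s$: it preserves each $t\otimes W$ and acts as $t^{-1}st$ on the contributing summands, as $0$ elsewhere. Since conjugation by $t$ is an automorphism of $F$ carrying $\langle a\rangle$ to $\langle t^{-1}at\rangle$, the element $t^{-1}st$ is exactly the $p$-regular part of $t^{-1}at$, and both lie in $F_x$ because $t^{-1}at\in F_x$ and $F_x$ is closed under taking powers.

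Now the eigenvalue computation splits as a direct sum over $t\in T_x$. On each summand with $y\lhd t=x$, the eigenvalues of $\rho(p_y\#a)$ are the eigenvalues of the representation $W$ of $F_x$ evaluated at $t^{-1}at$, and those of $\rho(p_y\#s)$ are the eigenvalues at its $p$-regular part $t^{-1}st$; by Lemma \ref{factor} applied to the matrix representation $F_x\to GL(W)$ over $\k$, these agree with multiplicity. On each summand with $y\lhd t\neq x$ both operators vanish and contribute only zero eigenvalues to each. Summing the eigenvalue multisets over all $t\in T_x$ gives the claim. The only mild subtlety is bookkeeping: one must confirm that ``$p$-regular part of $a$'' transported by the inner automorphism $\mathrm{ad}(t^{-1})$ genuinely is ``$p$-regular part of $t^{-1}at$'' (immediate, since automorphisms preserve element order and the $p$/$p'$ decomposition), and that when $\rho(p_y\#a)=0$ identically the degenerate case is handled by the first paragraph; neither is a real obstacle, so the proof is essentially a matter of organizing the block-diagonal decomposition and invoking the two earlier lemmas.
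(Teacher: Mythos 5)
Your first paragraph contains a genuine error, and it is exactly the point where the statement is delicate. From the fact that $s$ is a power of $a$ you may conclude $a\in F_y\Rightarrow s\in F_y$, but not the converse: the $p$-part $u$ need not stabilize $y$ even when $s$ does. In that mismatched case ($s\in F_y$ but $a=su\notin F_y$) your dichotomy breaks down, and in fact the asserted conclusion itself fails whenever in addition $y\in\CO_x$: by (\ref{power}) one has $(p_y\#a)^2=0$, so $\rho(p_y\#a)$ is nilpotent, while your own block analysis applied to $s$ shows that $\rho(p_y\#s)$ acts on each block $t\ot W$ with $y\lhd t=x$ as the invertible operator given by $t^{-1}st\in F_x$, hence has nonzero eigenvalues. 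Such configurations occur: since $F_y=F\cap y^{-1}Fy$, in the factorization $\CS_8=\CS_7C_8$ of Section 7 with $p=3$ and $y$ a generator of $C_8$, $F_y$ is the stabilizer in $\CS_7$ of a further letter; take $s$ a transposition in $F_y$, $u$ a disjoint $3$-cycle in $\CS_7$ moving that letter, and $a=su$. So the ``iff'' cannot be repaired; the lemma must be read (and proved) under the hypothesis $a\in F_y$, i.e.\ $p_y\#a\in\CB'$. This is how the paper tacitly treats it --- its proof starts from a vector with $(p_y\#a)[b\ot w]\neq 0$ and invokes Lemma \ref{stab} to force $a\in F_y$ --- and it is all that is needed later, since Brauer characters are only defined on $\CB_{p'}\subseteq\CB'$. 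You should add this hypothesis (or restrict explicitly to that case) rather than claim the degenerate cases match.

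Granting $a\in F_y$, the rest of your proof is correct and takes a genuinely different route from the paper's. You block-diagonalize $\rho(p_y\#a)$ and $\rho(p_y\#s)$ along $\hat W=\bigoplus_{t\in T_x}t\ot W$, identify each nonzero block with the action of $t^{-1}at$ (respectively its $p$-regular part $t^{-1}st$) on the $\k F_x$-module $W$, and quote the classical Lemma \ref{factor} for the group $F_x$; the paper instead shows that, although $p_y\#s$ and $1\#u$ need not commute in $H_{\k}$, their actions on $\hat W$ do commute, and then argues as in the group case using that $\rho(1\#u)$ is unipotent. Your version makes explicit where the eigenvalues live (eigenvalues of elements of $F_x$ on $W$), which fits well with Definition \ref{Hchar}; the paper's version avoids the coset bookkeeping. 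Both arguments are sound once the hypothesis $a\in F_y$ is in place.
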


\begin{proof} 
Even though $p_y\#s$ and $1\#u$ do not commute, their actions on $\hat{W}$ do commute:
suppose  $b\otimes w$ is such that $(p_y \# a)\cdot [b\otimes w] \neq 0.$ By Lemma \ref{stab}, 
$y \lhd b = x$ and $a \in F_y$. Thus  $s \in F_y$ since $s$ is a power of $a$. Then
\begin{eqnarray*}{}
(p_y \# s)(1\#u)\cdot [b\otimes w] & = & (p_y \# a)\cdot [b\otimes w]\\
          					 & = & ab\otimes v
 \end{eqnarray*}
 
and 
\begin{eqnarray*}
(1\#u)(p_y \# s)\cdot [b\otimes w] & = & \delta_{y\lhd sb, x}(1\# u)\cdot [sb\otimes w]\\
          					 & = & (1\#u)\cdot [sb\otimes w]\\
					 	 & = & usb\otimes w \\
						 & = & ab\otimes w.
           \end{eqnarray*}         
Since the eigenvalues of $1 \#u$ are all $1,$ the eigenvalues of $p_y\#a$ are the same as 
those of $p_y\#s.$
\end{proof}

The lemma shows that to find the character of some $p_y\#a$, it suffices to look at the character of 
$p_y\#s$, where $s$ is the $p'$-part of $a$. Moreover, by Lemma \ref{stab}, the character of $p_y\#a$ 
will be non-zero only if $a \in F_y$. 

Thus, as a replacement for the $p'$-elements of the group in the classical case, we consider the subset of the basis $\CB$ defined in (\ref{basis}) of those elements which are non-nilpotent element and have 
group element in $F_{p'}$. That is, we define 
\begin{equation} \num\label{p'basis}
\CB_{p'} := \{ p_y \# a \in \CB' \ | \  a \in F_{p'} \} = \{ p_y \# a \in \CB \ | \  a \in F_y \cap F_{p'} \},
\end{equation} 
where $F_{p'}$ is the set of $p$-regular elements in $F$. By Lemma \ref{SB}, $\CB_{p'}$ is also closed 
under the antipode $S$, since if $a \in F_{p'}$ and $w = p_y \# a$ is non-nilpotent, then by Lemma 
\ref{SB}(3), $S(w) = p_{y^{-1}} \# y a^{-1}y^{-1}$. Since $y a^{-1}y^{-1}$ has the same order as $a$, $S(w)$ is also in $\CB_{p'}$.

The above remarks motivate our definition of Brauer characters for $H_{\k},$ by using the formula 
 in Lemma \ref{lemma3}. That is, if $W = W_x$ is a simple $\k F_x$-module with character $\psi$, then 
 the character of the simple $H_{\k}$-module $\hat{W}$ is given by 
 \begin{equation}\num \label{hatpsi}
 \hat{\psi}(p_y\#a)=\sum_{t \in T_x \text{ and }t^{-1}at\in F_x}\delta_{y\lhd t, x} \ \psi(t^{-1}at).
 \end{equation}

\begin{df}\label{Hchar} {\rm  Let $W = W_x$ be a simple $\k F_x$-module with character $\psi$, and let $\phi$ be the classical Brauer character of $W$ constructed from $\psi$. Then the {\it Brauer character} of $\hat{W}$ is the function $$\hat{\phi}: \CB_{p'} \to \K$$
defined on any $p_y\#a \in \CB_{p'}$ by}
$$\hat{\phi}(p_y\#a)=\sum_{t \in T_x \text{ and }t^{-1}at\in F_x}\delta_{y\lhd t, x} \ \phi(t^{-1}at).$$
\end{df}

\begin{remark} {\rm  If $\hat{\phi}$ is a Brauer character, then also $\hat{\phi}^* = \hat{\phi} \circ S$ 
is a Brauer character: namely if $\hat{\phi}$ is the Brauer character for $\hat{\psi}$, then 
$\hat{\phi}^*$ is the Brauer character of $\hat{\psi}^*$, using the fact that $\CB_{p'}$ 
is stable under $S$.}
\end{remark}

\vs

We fix the following notation, as for groups:

\vs

\noindent (1)  $Irr(H_{\K}) =\{\hat{\chi_1}, \ldots, \hat{\chi_n}\}$ denotes the irreducible characters of $H_{\K}$; 

\noindent (2) $Irr_{\k}(H_{\k}) = \{\hat{\psi_1}, \ldots, \hat{\psi_d}\}$ denotes the irreducible 
characters of $H_{\k}$; 

\noindent (3) $IBr(H_{\k}) = \{\hat{\phi_1}, \ldots, \hat{\phi_d}\}$ denotes the Brauer characters corresponding to $\{\hat{\psi_1}, \ldots, \hat{\psi_d}\}$. As for groups, the elements of $IBr(H_{\k})$ 
are called {\it irreducible} Brauer characters. 

\noindent  (4) $Bch(H_{\k})$  denotes the ring of {\it virtual} Brauer characters of $H_{\k}$, that is, 
the $\Z$-linear span of the irreducible Brauer characters. 

\vs

\begin{lem} \label{Hlincom} $\hat{\phi}_j$ is a $\K$-linear combination of the $\hat{\chi}_i  |_{\CB_{p'}}.$
Consequently if all $\hat{\chi}_i$ are self-dual, then all $\hat{\phi}_j$ are also self-dual, and so are all 
$\hat{\psi}_j$.
\end{lem}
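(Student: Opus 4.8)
The plan is to transport the classical statement of Remark \ref{lincom}---that each group Brauer character $\phi_j$ is a $\K$-linear combination of the restrictions $\chi_i|_{G_{p'}}$---up to the bismash product, using the explicit inducing formulas \eqref{hatpsi} and the one in Definition \ref{Hchar}. First I would recall that by Proposition \ref{mod} the irreducible $H_\K$-modules are exactly the $\hat{V}_x$ for $x$ running over $F$-orbit representatives in $G$ and $V_x$ running over $\mathrm{Irr}(\K F_x)$, and likewise the irreducible $H_\k$-modules are the $\hat{W}_x$ with $W_x \in \mathrm{Irr}_\k(\k F_x)$. So fix such an $x$ and a simple $\k F_x$-module $W_x$ with character $\psi$ and associated classical Brauer character $\phi$. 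By Remark \ref{lincom} applied to the group $F_x$, there are scalars $c_i \in \K$ with $\phi = \sum_i c_i \, \chi^{(x)}_i|_{(F_x)_{p'}}$, where $\{\chi^{(x)}_i\}$ are the irreducible $\K F_x$-characters.

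The key step is then to check that the inducing map $\psi \mapsto \hat\psi$ (and $\phi \mapsto \hat\phi$) given by the formula in Definition \ref{Hchar} is $\K$-linear in the character of $F_x$, on the nose, for a fixed orbit $\CO_x$. This is visible from the formula: for $p_y\#a \in \CB_{p'}$,
\[
\hat{\phi}(p_y\#a)=\sum_{t \in T_x,\ t^{-1}at\in F_x}\delta_{y\lhd t, x}\ \phi(t^{-1}at),
\]
and the right-hand side depends $\K$-linearly on the class function $\phi$ on $F_x$ (note $t^{-1}at \in F_x$ is $p$-regular whenever $a$ is, by Lemma \ref{Hfactor}, so evaluating $\phi$ makes sense). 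Substituting $\phi = \sum_i c_i\,\chi^{(x)}_i|_{(F_x)_{p'}}$ gives $\hat\phi = \sum_i c_i\,\widehat{\chi^{(x)}_i}$ as functions on $\CB_{p'}$. Since $\widehat{\chi^{(x)}_i} = \hat\chi_{m(i)}$ for the appropriate index $m(i)$ into $\mathrm{Irr}(H_\K)$ (again Proposition \ref{mod}), and since $\hat\chi_{m(i)}|_{\CB_{p'}}$ agrees with $\widehat{\chi^{(x)}_i}$ because the restriction to $\CB_{p'}$ only picks up contributions with $a \in F_y$ and $a$ $p$-regular---which is exactly the range on which the formula of Lemma \ref{lemma3} reduces to that of Definition \ref{Hchar}---we obtain $\hat{\phi}_j = \sum_i c_i\,\hat{\chi}_i|_{\CB_{p'}}$, as claimed.

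For the ``consequently'' clause: suppose every $\hat\chi_i$ is self-dual, i.e. $\hat\chi_i \circ S = \hat\chi_i$. Since $\CB_{p'}$ is stable under $S$ (established just before Definition \ref{Hchar}), $\hat\chi_i|_{\CB_{p'}}$ is also $S$-invariant, hence so is the $\K$-linear combination $\hat\phi_j = \hat\phi_j^* := \hat\phi_j \circ S$. By the Remark following Definition \ref{Hchar}, $\hat\phi_j^*$ is the Brauer character of $\hat\psi_j^*$, while $\hat\phi_j$ is that of $\hat\psi_j$; since distinct irreducible $\k F_x$-modules have $\K$-linearly independent classical Brauer characters, the inducing formula and the orbit parametrization make $j \mapsto \hat\phi_j$ injective, so $\hat\phi_j = \hat\phi_j^*$ forces $\hat\psi_j = \hat\psi_j^*$, i.e. each $\hat\psi_j$ is self-dual.

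I expect the main obstacle to be the bookkeeping that makes the second and third sentences of the middle paragraph precise: namely checking carefully that the ordinary $H_\K$-character $\hat\chi_i$, when restricted from all of $\CB$ down to $\CB_{p'}$, literally coincides with the ``inducing formula'' expression $\widehat{\chi^{(x)}_i}$ of Definition \ref{Hchar} rather than the more complicated Lemma \ref{lemma3} expression---this uses Lemma \ref{stab} (forcing $a \in F_y$, $x = y\lhd t$) together with Lemma \ref{Hfactor} (reducing to the $p$-regular part), and one must confirm that on $\CB_{p'}$ the $p$-part $u$ of $a$ is trivial so no reduction is even needed and the two formulas agree termwise. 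The linear-algebra facts (independence of classical irreducible Brauer characters, injectivity of the inducing parametrization) are standard and quotable from Section \ref{Brauer} and Proposition \ref{mod}.
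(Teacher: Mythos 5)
Your proposal is correct and follows essentially the same route as the paper: apply Remark \ref{lincom} to the stabilizer $F_x$ and lift the resulting $\K$-linear relation through the induction formula (Lemma \ref{lemma3} / Definition \ref{Hchar}), noting that on $\CB_{p'}$ all arguments $t^{-1}at$ are $p$-regular, and then deduce self-duality of the $\hat{\phi}_j$ and $\hat{\psi}_j$ from the $S$-stability of $\CB_{p'}$. Your closing step (injectivity of the assignment of Brauer characters to simples, via the orbit support and evaluation at $p_x\#a$) is a slightly different but equally valid justification of the paper's ``$\hat{\phi}_j^*=\hat{\phi}_j$ iff $\hat{\psi}_j^*=\hat{\psi}_j$'' claim; also note that $p$-regularity of $t^{-1}at$ follows simply from invariance of order under conjugation, not from Lemma \ref{Hfactor}.
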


\begin{proof} By Remark \ref{lincom} applied to $F_x$, the Brauer character $\phi_j$ may be written as 
$\phi_j = \sum_i    \alpha_i  \ \chi_i |_{(F_x)_{p'}}, $ for $\alpha_i \in \K$. Lifting this equation through induction up to $F_{p'}$ (and so to  $\CB_{p'}$) as in Lemma \ref{lemma3},  we obtain the first 
statement in the lemma. 

Now if all $\hat{\chi}_i$ are self-dual, then the same property holds for the $\hat{\phi}_j$ since they 
are linear  combinations of the $\hat{\chi}_i  |_{\CB_{p'}}.$ Fix one of the $\hat {\psi}_j$ and its 
Brauer character $\hat{\phi_j}$. Since $\hat {\phi}_j^* = \hat {\phi}_j\circ S$ and 
$\hat {\psi}_j^* = \hat {\psi}_j\circ S$, using the formula for $S$ as well as (\ref{hatpsi}) and the 
formula in \ref{Hchar}, we see that $\hat {\phi}_j^* = \hat {\phi}_j$ if and only if 
$\hat {\psi}_j^* = \hat {\psi}_j$.  
\end{proof} 

We may follow exactly the proof of Proposition \ref{CR17.5}, that is \cite[17.5, (2) - (4)]{CR}, to show the following:

\begin{prop} \label{CR17.5b}(1) $\hat{\phi}$ takes values in $R$ and $\overline{\hat{\phi}(p_y\#a))}=Tr(p_y\#a,\hat{W})$, for $a\in F_{p'}.$

(2) Given $H_{\k}$-modules $\hat{W_0}\supset \hat{W_1}\supset 0$, let $\hat{\phi}$ be the Brauer character afforded by $\ds{\hat{W_0} \ / \ \hat{W_1}},\ \hat{\phi_1}$ the Brauer character afforded by $\hat{W_1},$ and $\hat{\phi_0}$ the Brauer character of $\hat{W_0}$. Then 
$\hat{\phi_0}=\hat{\phi}+\hat{\phi_1}.$

(3) Let $V$ be a $\K \CB$-module with $\K$-character $\chi$. Then for each $R\CB$-lattice $M$ in $V$, 
the restriction $\chi |_{R\CB_{p'}}$ is the Brauer character of the $H_{\k}$-module $\overline{ M} := M/ \fp M$.
\end{prop}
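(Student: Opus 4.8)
The plan is to mirror, step by step, the classical proof of \cite[17.5, (2)–(4)]{CR}, replacing $G$ by the pair $(F_x, \CO_x)$ and the set $G_{p'}$ by $\CB_{p'}$, and to use the induction formula (\ref{hatpsi}) to transfer each statement about $F_x$-modules to the corresponding statement about $H_{\k}$-modules. The key structural input is Lemma \ref{Hfactor} together with Lemma \ref{stab}: these show that the value of any character of an $H_{\k}$-module at $p_y\#a$ depends, up to the $p$-part of $a$, only on the values at the ``$p$-regular'' basis elements $\CB_{p'}$, and moreover (via Lemma \ref{stab}(1)–(3)) that $\rho(p_y\#a)$ acts, on the summand it does not annihilate, by the action of the $F_x$-element $t^{-1}at$ on $W_x$. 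Thus the eigenvalues of $p_y\#a$ on $\hat W_x$ (the nonzero ones) are exactly the eigenvalues of $t^{-1}at$ on $W_x$ for the unique admissible coset representative $t$, so $\hat\phi$ is literally obtained from the classical Brauer character $\phi$ of $W_x$ by the same averaging formula that produces $\hat\psi$ from $\psi$.

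For part (1): fix $p_y\#a\in\CB_{p'}$, so $a\in F_y\cap F_{p'}$, and let $t\in T_x$ be the (unique, by Lemma \ref{stab}) representative with $y\lhd t = x$ and $t^{-1}at\in F_x$, which is then a $p$-regular element of $F_x$. By Lemma \ref{stab}(3), $\rho(p_y\#a)$ restricted to $t\ot W_x$ is $w\mapsto (t^{-1}at)w$, and it annihilates the other summands; hence its eigenvalues are the eigenvalues of $t^{-1}at$ on $W_x$, each an $m$-th root of unity lifting canonically through $f\colon R\to\k$. Summing these lifts gives exactly $\phi(t^{-1}at)$, which lies in $R$ by Proposition \ref{CR17.5}(1); so $\hat\phi(p_y\#a)=\phi(t^{-1}at)\in R$ and its reduction $\overline{\hat\phi(p_y\#a)}$ is $\sum f(\bar\om^{i_j})=\mathrm{Tr}(t^{-1}at,W_x)=\mathrm{Tr}(p_y\#a,\hat W_x)$. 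For part (2): given a submodule $\hat W_1\subset\hat W_0$ of $H_{\k}$-modules, evaluate all three Brauer characters at a fixed $p_y\#a\in\CB_{p'}$; on each graded piece the operator $\rho(p_y\#a)$ is, after the reduction of the previous paragraph, semisimplification-compatible, so the multiset of eigenvalues of $p_y\#a$ on $\hat W_0$ is the disjoint union of those on $\hat W_1$ and on $\hat W_0/\hat W_1$ (this is the $H_{\k}$-analogue of Lemma \ref{factor}'s bookkeeping, applied to an arbitrary element of $\CB_{p'}$, not just a group element), and summing the canonical lifts gives $\hat\phi_0(p_y\#a)=\hat\phi(p_y\#a)+\hat\phi_1(p_y\#a)$.

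For part (3): let $M$ be an $R\CB$-lattice in the $\K\CB$-module $V$ with $\K$-character $\chi$; I want $\chi|_{\CB_{p'}}$ to equal the Brauer character of $\overline M = M/\fp M$. Fix $w=p_y\#a\in\CB_{p'}$. The key point is that $\rho_V(w)$ stabilizes $M$ (since $M$ is $R\CB$-stable) and, because $w\in\CB'$ is non-nilpotent with $a$ of $p$-regular order $m$, its minimal polynomial divides $Z^{m+1}-Z$ by (\ref{power}); this polynomial has distinct roots $\{0\}\cup\{m\text{-th roots of }1\}$ over $R$ (as $p\nmid m$ and $\K$ contains $\om$), so $\rho_V(w)$ is diagonalizable over $R$ with eigenvalues in $\{0\}\cup\langle\om\rangle$, and these eigenspaces restrict to an $R$-module decomposition of $M$ that reduces mod $\fp$ to the eigenspace decomposition of $\rho_{\overline M}(w)$. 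Hence the nonzero eigenvalues of $w$ on $V$ are $m$-th roots of unity which reduce bijectively (with multiplicity) to the nonzero eigenvalues of $w$ on $\overline M$; summing, $\chi(w)=\sum\om^{i_j}=\sum f^{-1}(\bar\om^{i_j})$, which is precisely the value at $w$ of the Brauer character of $\overline M$. Since $w\in\CB_{p'}$ was arbitrary, this proves (3). The main obstacle I anticipate is part (3): one must be careful that ``$R\CB$-lattice'' really does force $\rho_V(p_y\#a)M\subseteq M$ for every $p_y\#a\in\CB_{p'}$ (immediate from Definition \ref{lat}, since $\CB_{p'}\subset\CB$) and that the non-nilpotency built into $\CB_{p'}$ is exactly what makes the semisimple-reduction argument go through without needing $H_{\k}$ to be semisimple — this is the analogue of, but genuinely simpler than, the classical statement, because here each individual basis element already has separable minimal polynomial whereas for groups one needs the structure of $p'$-elements.
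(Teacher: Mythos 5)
Your proposal is correct and follows essentially the same route as the paper, which simply says to carry over the proof of \cite[17.5, (2)--(4)]{CR}: you adapt that classical argument using Lemma \ref{stab} (the action of $p_y\#a$ on $\hat W_x$ is the action of $t^{-1}at$ on the single surviving summand $t\ot W_x$), additivity of eigenvalue multisets along submodules, and the separable minimal polynomial $Z^{m+1}-Z$ from (\ref{power}) to split the lattice $M$ into eigencomponents (the eigenvalue differences being units in $R$ since $p\nmid m$). The only points you leave implicit --- the trivial case $y\notin\CO_x$, and spelling out that the spectral idempotents have coefficients in $R$ so that $M=\bigoplus_{\la}(M\cap V_{\la})$ --- are routine and do not constitute gaps.
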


Similarly, one may follow the first part of the proof of Theorem \ref{indep} \cite[17.9]{CR}, replacing 
Lemma \ref{factor} with Lemma \ref{Hfactor}, to show

\begin{thm} \label{Hindep} The irreducible Brauer characters $IBr(H_{\k})$ are $\K$-linearly independent. \end{thm}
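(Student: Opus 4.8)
The plan is to mimic the classical linear-independence argument for $IBr(G)$ found in \cite[17.9]{CR}, but carried out on the set $\CB_{p'}$ of non-nilpotent basis elements with $p$-regular group part rather than on $G_{p'}$. The first step is to reduce a hypothetical $\K$-linear relation $\sum_j \gamma_j \hat{\phi}_j = 0$ on $\CB_{p'}$ to relations that live over a single orbit: since by Lemma \ref{stab} the character $\hat{\phi}_j$ attached to a simple $\k F_x$-module $W_x$ is supported only on those $p_y\#a$ with $y\in\CO_x$ and $x = y\lhd t$ for some coset representative $t$, evaluating the relation at elements $p_y\#a$ ranging over a fixed orbit $\CO$ isolates exactly the $\hat{\phi}_j$ coming from that orbit. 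Thus it suffices to prove linear independence orbit-by-orbit.

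For a fixed orbit $\CO = \CO_x$ with stabilizer $F_x$, formula (\ref{hatpsi}) (and its Brauer analogue in Definition \ref{Hchar}) shows that $\hat{\phi}_j(p_x\#a) = \sum_{t\in T_x,\ t^{-1}at\in F_x,\ x\lhd t = x}\phi_j(t^{-1}at)$; restricting attention to $y = x$ itself forces $t\in F_x$, so (choosing $1\in T_x$ as the representative of the trivial coset and using that the values $\phi_j(t^{-1}at)$ for $t\in F_x$ are all equal to $\phi_j(a)$ by the class-function property of $\phi_j$ on $(F_x)_{p'}$) the value $\hat{\phi}_j(p_x\#a)$ is a fixed positive-integer multiple (the number of such $t$) of $\phi_j(a)$ for $a\in (F_x)_{p'}$. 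Hence a linear relation among the orbit-$\CO$ Brauer characters $\hat{\phi}_j$, evaluated on $\{p_x\#a : a\in (F_x)_{p'}\}\subset\CB_{p'}$, becomes a linear relation among the classical irreducible Brauer characters $\phi_j$ of $\k F_x$ on $(F_x)_{p'}$. By Theorem \ref{indep} (more precisely, the linear-independence half of it) applied to the group $F_x$, all the coefficients vanish. Running this over every orbit $\CO$ kills every $\gamma_j$, giving the result.

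The step I expect to be the main obstacle is the bookkeeping in the orbit reduction: one must check that the supports of the $\hat{\phi}_j$ for distinct orbits are genuinely disjoint on $\CB_{p'}$ — i.e.\ that a basis element $p_y\#a\in\CB_{p'}$ determines the orbit $\CO_y$ unambiguously, which is immediate since $\CB_{p'}$ requires $a\in F_y$, so $y$ is intrinsic to the element — and then that within a single orbit the restriction to the slice $y=x$ really does recover the classical Brauer characters of $F_x$ faithfully (up to the harmless constant multiple coming from $|\{t\in F_x\}|$ being nonzero mod $0$ in $\K$, which it is, as $\K$ has characteristic $0$). One also needs to know that $p_x\#a\in\CB_{p'}$ exactly when $a\in (F_x)_{p'}$, which is precisely the definition (\ref{p'basis}). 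With these points in hand the argument is a direct translation of \cite[17.9]{CR}, using Lemma \ref{Hfactor} in place of Lemma \ref{factor} wherever the classical proof invokes the $p$-part/$p'$-part decomposition.
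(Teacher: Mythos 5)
Your argument is correct, and it takes a genuinely different route from the paper. The paper's proof is a one-line reduction to the classical argument: it reruns the first part of the proof of Theorem \ref{indep} (\cite[17.9]{CR}) directly for $H_{\k}$, with Lemma \ref{Hfactor} standing in for Lemma \ref{factor}, so the work happens at the level of the algebra $H_{\k}$ itself (linear independence of the irreducible $\k$-characters of $H_{\k}$, passage from arbitrary non-nilpotent basis elements to $\CB_{p'}$ via the eigenvalue comparison, and reduction mod $\fp$). You instead reduce to the already-known Theorem \ref{indep} for each stabilizer group $F_x$: orbit separation (legitimate, since the defining formula forces $\hat{\phi}_{j,x}$ to vanish off $\{p_y\#a : y\in\CO_x\}$, and $y$ is intrinsic to the basis element) followed by evaluation on the slice $\{p_{x}\#a : a\in (F_x)_{p'}\}$, where $\hat{\phi}_{j,x}(p_x\#a)=\phi_{j,x}(a)$. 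One small sharpening of your bookkeeping: the sum in Definition \ref{Hchar} at $y=x$ has exactly one admissible $t$, namely the unique representative in $T_x$ of the coset $F_x$ itself, so the "positive-integer multiple" is in fact $1$ and no appeal to characteristic $0$ is needed there. Your route has the advantage of using the classical group theorem as a black box and of not needing Lemma \ref{Hfactor} at all; it is also the same mechanism that underlies the paper's later identity $\hat{D}_x = D_x$ for decomposition matrices. The paper's route, by contrast, is intrinsic to $H_{\k}$ and is the template it also uses for Proposition \ref{CR17.5b}, which is presumably why the authors phrase the proof as a substitution into \cite[17.9]{CR} rather than as a reduction to the stabilizers.
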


We may also extend the decomposition map $d$ in Section 3 to obtain a homomorphism of abelian groups
\begin{equation} \num \label{demap}  \hat{d}: G_0(H_{\K}) \to G_0(H_{\k} ),
\end{equation}
called the {\it decomposition map}, such that for any class $[\hat{V}]$ in $G_0(H_{\K} )$, 
$\hat{d}([\hat{V}]) = [\overline{M}]  \in 
 G_0(H_{\k} ),$ where $M$ is any $R\CB$-lattice in $\hat{V}$ and $\overline{ M} := M/ \fp M$.

Again using the facts about groups, the set $Bch(H_{\k} )$ of {\it virtual Brauer characters}, 
that is $\Z$-linear combinations of Brauer characters of $\k G$-modules, is a ring under addition and multiplication of functions, and $Bch(H_{\k} ) \cong G_0(H_{\k} ).$ Using that 
$G_0(\K G) \cong ch(\K G)$, the ring of virtual characters of $H_{\K}$, it follows that the decomposition map $\hat{d}$ induces a map 
$$\hat{d'}: ch(H_{\K }) \to Bch(H_{\k }), $$ 
where $\hat{d'}$ is the restriction map $\hat{\psi} \to \hat{\psi} |_{\CB_{p'}} $.

Consequently Equation (\ref{decomp}) implies that if $\hat{\chi}$ is the character for $\hat{V}$ and 
$\hat{ \chi} |_{\CB_{p'}} = \sum_j \alpha_j \hat{\phi}_j$, where $\hat{\phi}_j$ is the Brauer character of the simple 
$H_{\k }$-module $\hat{W}_j$,  and $\hat{d}([\hat{V}]) = [\overline{M}]  \in G_0(H_{\k }),$ then 
\begin{equation} \label{decompmodp} [\overline{M}]  =  \sum_j \alpha_j [W_j].
\end{equation}

From now on we wish to distinguish the characters (over $\k$ or $\K$) which arise from stabilizers 
of elements in different $F$-orbits of $G$. Assume that there are exactly $s$ distinct orbits of $F$ on 
$G$ and that we fix $x_q \in \CO_q$, the $q^{th}$ orbit. Thus for a fixed $x= x_q \in G$ with stabilizer $F_x = F_{x_q}$, we will write 
$\chi_{i, x}$ for an irreducible character of $\K F_x$, and $\hat{\chi}_{i, x}$ for its induction up to $\K F$, 
which becomes an irreducible character of $H_{\K}$. 

Similarly $\psi_{j, x}$ denotes an irreducible character of $\k F_x$, and $\hat{\psi}_{j, x}$ its induction up to $\k F$, 
which becomes an irreducible character of $H_{\k}$. Also $\phi_{j,x}$ denotes the Brauer 
character corresponding to $\psi_{j, x}$, and $\hat{\phi}_{j,x}$ the Brauer character corresponding to 
$\hat{\psi}_{j, x}$.

\begin{lem} Let $\phi_x$ be a virtual Brauer character of $\k F_x$ and assume that 
$\phi_x = \sum_j z_{j,x} \phi_{j,x}$, where as above the $\phi_{j,x}$ are the Brauer characters of $\k F_x$. 

Then $\hat{\phi}_x = \sum_j z_{j,x} \hat{\phi}_{j,x}.$
\end{lem}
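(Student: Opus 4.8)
The plan is to observe that the assignment $\phi_x \mapsto \hat\phi_x$, which sends a virtual Brauer character of $\k F_x$ to the induced virtual Brauer character of $H_{\k}$ via formula (\ref{hatpsi}) and Definition \ref{Hchar}, is additive, so that the identity $\phi_x = \sum_j z_{j,x}\,\phi_{j,x}$ is preserved under it. Concretely, I would unwind the definition: for $p_y\#a \in \CB_{p'}$,
\[
\hat\phi_x(p_y\#a) = \sum_{t\in T_x,\ t^{-1}at\in F_x} \delta_{y\lhd t,\,x}\,\phi_x(t^{-1}at),
\]
and the right-hand side depends $\K$-linearly on the class function $\phi_x$ on $(F_x)_{p'}$, since $T_x$, the orbit representative $x$, and the $\delta$-factors are fixed once $x$ is fixed, and only the coefficient $\phi_x(t^{-1}at)$ varies. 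Substituting $\phi_x = \sum_j z_{j,x}\,\phi_{j,x}$ and interchanging the two finite sums gives
\[
\hat\phi_x(p_y\#a) = \sum_j z_{j,x}\sum_{t\in T_x,\ t^{-1}at\in F_x}\delta_{y\lhd t,\,x}\,\phi_{j,x}(t^{-1}at) = \sum_j z_{j,x}\,\hat\phi_{j,x}(p_y\#a),
\]
which is the claimed identity of functions on $\CB_{p'}$.

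The one point that needs a word of care is that the formula for $\hat\phi_x$ in Definition \ref{Hchar} is stated for a genuine Brauer character $\phi$ of a simple (or at least actual) $\k F_x$-module, whereas here $\phi_x$ is only a \emph{virtual} Brauer character. So the first step is really to note that the right-hand side of the displayed formula makes sense, and is well-defined, for an arbitrary $\K$-valued class function on $(F_x)_{p'}$, and in particular for any $\Z$- or $\K$-linear combination of the $\phi_{j,x}$; this is exactly the ``induction'' map on class functions used implicitly in Lemma \ref{Hlincom}. Once one adopts this extended definition, $\phi_x \mapsto \hat\phi_x$ is manifestly $\K$-linear (hence $\Z$-linear), and the lemma is immediate.

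There is no real obstacle here: the statement is essentially the linearity of the induction-type operation $\phi_x\mapsto\hat\phi_x$ together with the compatibility already recorded in Lemma \ref{lemma3} and the discussion preceding Lemma \ref{Hlincom}. If anything, the only thing to be careful about is to check that the passage from $\phi_{j,x}$ (Brauer characters of $\k F_x$) to $\hat\phi_{j,x}$ agrees with the construction of Definition \ref{Hchar}, i.e.\ that inducing first and taking the Brauer character second gives the same answer as taking Brauer characters and then inducing in the above sense — but this is exactly how $\hat\phi_{j,x}$ was defined in the notation fixed just before the lemma, so no argument is needed beyond pointing to that definition.
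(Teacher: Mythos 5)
Your proposal is correct and takes essentially the same approach as the paper, which disposes of the lemma in one line by noting that it follows from the formula in Definition \ref{Hchar}, i.e.\ from the evident linearity of $\phi \mapsto \hat{\phi}$ in the class function $\phi$. Your added remark that the defining formula extends to arbitrary $\K$-valued class functions on $(F_x)_{p'}$, so that $\hat{\phi}_x$ is well defined for a merely virtual Brauer character, is just a careful spelling-out of the same observation.
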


The lemma follows from Definition \ref{Hchar} of a Brauer character $\hat{\phi}$ for $H_{\k}$ in terms of  a Brauer character $\phi$ for $\k F_x$. 
Moreover Lemma \ref{lemma3} becomes 
$$\hat{\chi}_{i,x}(p_y\#a) = \sum_{t\in T_x \text{ and }\newline t^{-1}at \in F_x} \delta_{y\lhd t,x} \chi_{i, x} (t^{-1}at) .$$

 Applying Equation (\ref{decomp}) to $F_x,$ there are integers $d_{{ij},x}$ such that 
$$ \chi_{i,x} |_{(F_x)_{p'}} = \sum_j d_{{ij}, x} \phi_{j,x} ,$$ 
where the $ \phi_{j,x} $ are in $IBr(\k F_x)$.

Lifting the $\chi_{i,x}$ to $\hat \chi_{i,x}$ on $\CB_{p'}$, we see that 
$$\hat{ \chi}_{i ,x} |_{\CB_{p'}} = \sum_j d_{{ij},x} \hat{\phi}_{j,x}.$$ 

That is, the decomposition numbers for the $\hat{ \chi}_{i ,x} |_{\CB_{p'}} $ with resepct to the 
$\hat{\phi}_{j,x}$ are the same as the decomposition numbers for the 
$ \chi_{i ,x} |_{(F_x)_{p'}}$ with respect to the $\phi_{j,x}$ 
for the group $F_x$. Thus the decomposition matrix $\hat{D}_x = [ d_{{ij},x}]$ for  the 
$\hat{ \chi}_{i ,x} |_{\CB_{p'}}$ with respect to the $\hat{\phi}_{j,x}$ is the same as the decomposition matrix $D_x$ for the $ \chi_{i ,x} |_{(F_x)_{p'}}$ with respect to the $\phi_{j,x}.$ 

The above discussion proves 

\begin{prop} As above, assume that there are exactly $s$ distinct orbits $\CO$ of $F$ on $G$ and 
choose $x_q \in \CO_q$, for $q = 1, \dots,s$. Then 

\noindent (1) $\hat{D}_{x_q} = D_{x_q}$

\noindent (2) The decomposition matrix for the $\hat{ \chi}_i |_{\CB_{p'}}$ with respect to the 
$\hat{\phi}_j$ is the block matrix 
 $\hat{D} = \left [\begin{array}{llll}
\hat{D}_{x_1} & 0 & \cdots & 0\\
0 & \hat{D}_{x_2} & \cdots & 0\\
0  & 0 & \cdots & 0\\
0 &0 &\cdots & \hat{D}_{x_s} \end{array}\right]$
where $\hat{D}_{x_q}$ is the decomposition matrix of $\hat{ \chi}_{i ,x_q} |_{\CB_{p'}}$ with respect to 
$\hat{\phi}_{j, x_q}$. 
\end{prop}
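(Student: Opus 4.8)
The plan is to deduce the Proposition directly from the block structure of the characters of $H_{\k}$ together with the orbit-wise identification of decomposition numbers that the paragraphs preceding the statement have already established. Part (1), namely $\hat{D}_{x_q} = D_{x_q}$, is essentially a restatement of the immediately preceding discussion: the induction process $\psi_{j,x_q} \mapsto \hat{\psi}_{j,x_q}$ (and likewise for the $\chi$'s and $\phi$'s) sends the decomposition equation $\chi_{i,x_q}|_{(F_{x_q})_{p'}} = \sum_j d_{ij,x_q}\,\phi_{j,x_q}$ over the group $F_{x_q}$ to the equation $\hat{\chi}_{i,x_q}|_{\CB_{p'}} = \sum_j d_{ij,x_q}\,\hat{\phi}_{j,x_q}$ over $H_{\k}$, with the \emph{same} integer coefficients, because induction (via the formula in Lemma \ref{lemma3} / Definition \ref{Hchar}) is $\Z$-linear in the underlying class function. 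So one invokes Theorem \ref{Hindep} to know the $\hat{\phi}_{j,x_q}$ are $\K$-linearly independent, hence the decomposition numbers $d_{ij,x_q}$ are uniquely determined, and concludes $\hat{D}_{x_q} = D_{x_q}$.

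For part (2), I would first recall from Proposition \ref{mod} that every irreducible $H_{\K}$-module (resp. $H_{\k}$-module) is of the form $\hat{V}_{x_q}$ for a unique orbit index $q$ and a simple module of $F_{x_q}$; hence $Irr(H_{\K})$ partitions as a disjoint union over $q=1,\dots,s$ of the families $\{\hat{\chi}_{i,x_q}\}_i$, and similarly $IBr(H_{\k})$ is the disjoint union of the $\{\hat{\phi}_{j,x_q}\}_j$. The key point is then that the restriction to $\CB_{p'}$ respects this partition: by Lemma \ref{stab}, $\hat{\chi}_{i,x_q}(p_y\#a)$ is supported on those $p_y\#a \in \CB_{p'}$ for which $y$ lies in the orbit $\CO_q$, so a character coming from orbit $q$ contributes zero to $\hat{\phi}_{j,x_r}$ whenever $r\neq q$. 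Therefore in the expansion $\hat{\chi}_{i,x_q}|_{\CB_{p'}} = \sum_{r,j} d\,\hat{\phi}_{j,x_r}$ only the $r=q$ terms can appear, and by part (1) those coefficients form exactly the block $\hat{D}_{x_q}=D_{x_q}$. Assembling the rows (indexed by $(q,i)$) and columns (indexed by $(r,j)$) in orbit order gives the block-diagonal matrix displayed in the statement; the off-diagonal blocks vanish for precisely the support reason just described.

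The main obstacle — really the only non-formal point — is justifying that the restriction $\hat{\chi}_{i,x_q}|_{\CB_{p'}}$ genuinely decouples across orbits, i.e. that an irreducible Brauer character attached to orbit $q$ cannot "leak" into the support of an irreducible Brauer character attached to orbit $r\neq q$. This needs the observation that for $p_y\#a \in \CB_{p'}$ with $a\in F_y$, the value $\hat{\chi}_{i,x_q}(p_y\#a)$ involves the Kronecker delta $\delta_{y\lhd t,\,x_q}$, which forces $y$ into $\CO_q$; since the orbits $\CO_1,\dots,\CO_s$ are disjoint, the supports of characters from distinct orbits are disjoint subsets of $\CB_{p'}$, and linear independence (Theorem \ref{Hindep}) then prevents any cancellation that could mix blocks. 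Once this is in hand, writing down the block matrix and checking it is the decomposition matrix of the $\hat{\chi}_i|_{\CB_{p'}}$ with respect to the $\hat{\phi}_j$ is immediate from part (1) and the defining relation $\hat{\chi}_i|_{\CB_{p'}} = \sum_j d_{ij}\hat{\phi}_j$.
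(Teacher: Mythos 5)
Your proposal is correct and follows essentially the same route as the paper: part (1) is exactly the paper's lifting of $\chi_{i,x_q}|_{(F_{x_q})_{p'}}=\sum_j d_{ij,x_q}\,\phi_{j,x_q}$ through the induction/character formula to $\hat\chi_{i,x_q}|_{\CB_{p'}}=\sum_j d_{ij,x_q}\,\hat\phi_{j,x_q}$, and part (2) assembles these orbit by orbit into the block-diagonal matrix. Your explicit justification of the vanishing off-diagonal blocks --- disjointness of supports forced by $\delta_{y\lhd t,\,x_q}$ together with the linear independence in Theorem \ref{Hindep} --- merely makes precise what the paper leaves implicit in the phrase ``the above discussion proves.''
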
 

As for groups, $\hat{C} = \hat{D}^t \hat{D}$ is called the {\it Cartan matrix}. We are now able to extend 
the theorem of Brauer we need (\ref{brauer}). 

\begin{thm} \label{Hbrauer} $Det(\hat{C})$ is a power of $p$. 
\end{thm}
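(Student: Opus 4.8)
The plan is to reduce the statement for $H_{\k}$ to the classical theorem of Brauer (Theorem~\ref{brauer}) applied separately to each stabilizer group $F_{x_q}$, using the block decomposition of $\hat{D}$ established in the Proposition immediately preceding. First I would observe that since $\hat{D}$ is block diagonal with blocks $\hat{D}_{x_q} = D_{x_q}$ for $q = 1, \dots, s$, the matrix $\hat{C} = \hat{D}^t \hat{D}$ is itself block diagonal, with $q^{\text{th}}$ block equal to $\hat{D}_{x_q}^t \hat{D}_{x_q} = D_{x_q}^t D_{x_q} = C_{x_q}$, the classical Cartan matrix of the group algebra $\k F_{x_q}$. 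Consequently $Det(\hat{C}) = \prod_{q=1}^s Det(C_{x_q})$.

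By Theorem~\ref{brauer} (Brauer's theorem, \cite[18.25]{CR}), each $Det(C_{x_q})$ is a power of $p$; say $Det(C_{x_q}) = p^{e_q}$ with $e_q \geq 0$. Then $Det(\hat{C}) = \prod_{q} p^{e_q} = p^{\sum_q e_q}$, which is again a power of $p$, completing the proof. The only point requiring a word of care is that the Proposition guarantees $\hat{D}_{x_q} = D_{x_q}$ as integer matrices, so the identifications of the Cartan blocks are literal equalities, not merely equivalences; and that $\k$ is a splitting field for each $F_{x_q}$ (it is, since $\k$ splits $H_{\k}$ by the standing assumption, and the simple $\k F_{x_q}$-modules are exactly the ones inducing to simple $H_{\k}$-modules by Proposition~\ref{mod}), so that the hypotheses of Brauer's theorem are met for each stabilizer.

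I expect no serious obstacle here: the substantive work was already done in building the Brauer-character theory for $H_{\k}$ and in proving the block-diagonal form of $\hat{D}$, so the remaining argument is purely the bookkeeping observation that a block-diagonal matrix has determinant equal to the product of the determinants of its blocks, and that a finite product of powers of $p$ is a power of $p$. If anything needs emphasis, it is simply to record explicitly that $e_q \geq 0$ so that the exponents add without cancellation, and that $s$ is finite (which holds since $G$ is a finite group and hence has finitely many $F$-orbits).

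\begin{proof}
By the Proposition preceding the statement, the decomposition matrix $\hat{D}$ is block diagonal with blocks $\hat{D}_{x_1}, \dots, \hat{D}_{x_s}$, and $\hat{D}_{x_q} = D_{x_q}$ for each $q$, where $D_{x_q}$ is the ordinary decomposition matrix of the group algebra $\k F_{x_q}$. Hence
$$
\hat{C} = \hat{D}^t \hat{D} = \left[\begin{array}{llll}
D_{x_1}^t D_{x_1} & 0 & \cdots & 0\\
0 & D_{x_2}^t D_{x_2} & \cdots & 0\\
0 & 0 & \cdots & 0\\
0 & 0 & \cdots & D_{x_s}^t D_{x_s}
\end{array}\right]
= \left[\begin{array}{llll}
C_{x_1} & 0 & \cdots & 0\\
0 & C_{x_2} & \cdots & 0\\
0 & 0 & \cdots & 0\\
0 & 0 & \cdots & C_{x_s}
\end{array}\right],
$$
where $C_{x_q} = D_{x_q}^t D_{x_q}$ is the Cartan matrix of $\k F_{x_q}$. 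Since $\k$ splits $H_{\k}$, it splits each $F_{x_q}$ (the simple $\k F_{x_q}$-modules are, by Proposition \ref{mod}, precisely those whose inductions are the simple $H_{\k}$-modules). Therefore Theorem \ref{brauer} applies to each $F_{x_q}$, giving $Det(C_{x_q}) = p^{e_q}$ for some integer $e_q \geq 0$. Because $G$ is finite, $s$ is finite, and
$$
Det(\hat{C}) = \prod_{q=1}^s Det(C_{x_q}) = \prod_{q=1}^s p^{e_q} = p^{\,e_1 + \cdots + e_s},
$$
which is a power of $p$.
\end{proof}
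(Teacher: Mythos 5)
Your proof is correct and follows essentially the same route as the paper: both use the block-diagonal form of $\hat{D}$ (hence of $\hat{C}$, with blocks the classical Cartan matrices $C_{x_q}$) and apply Brauer's theorem (Theorem \ref{brauer}) to each stabilizer $F_{x_q}$, concluding that a product of powers of $p$ is a power of $p$. Your added remarks on $\k$ splitting each $F_{x_q}$ and the finiteness of $s$ are fine but not points the paper felt the need to spell out.
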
 

\begin{proof} First, $\hat C$ is also a block matrix, with blocks 
$\hat{C}_{x_q} = (\hat{D}_{x_q})^t \hat{D}{x_q}$. By Brauer's theorem applied to each group
 $F_{x_q}$, we know that $Det(\hat{C}_{x_q})$ is a power of $p$. Thus $Det (\hat C)$ is a power of $p$. 
\end{proof} 

\vs

%----------------------------------------5. Invariant forms: lattices and Witt kernels. -------------------------------

\section{Invariant Forms: Witt kernels and Lattices} \label{witt}

A first step in the direction of extending Thompson's theorem concerns the Witt kernel of a module 
with a bilinear form as in Theorem \ref{GMind}. 
We will show that, for an arbitrary field $\E$, the notion of Witt kernel of an $\E G$-module extends 
to $H_{\E}$-modules. One can then follow the argument in \cite{Th}. 

Let $V$ be a finitely-generated $H_{\E}$-module which is equipped with a non-degenerate 
$H_{\E}$-invariant bilinear form 
$\lan - ,-  \ran : V \ot_{\E} V \to \E$, which is either symmetric or skew symmetric. For example, if $\E$ is algebraically closed, then any irreducible self-dual $H_{\E}$-module has such a form by Theorem \ref{GMind}. For any submodule $U$ of $V$, 
$$U^{\perp} = \{ v \in V \ | \ \lan v, U\ran = 0\}.$$ 
Since the form is $H$-invariant and $S$ is the adjoint of the form, for all $u \in U^{\perp},$
$$\lan h \cdot v, U\ran = \lan v, S(h)\cdot U\ran = \lan v, U\ran =0.$$ 
Thus $U^{\perp}$ is also a submodule of $V$. Note also that $U^{\perp \perp} = U$ since $V$ is 
finite-dimensional over $\E$. Let
$$\CM = \CM_V = \{V_0 \ | \ V_0 \text{ is an $H_{\E}$-submodule of $V$ and }\lan V_0 , V_0\ran = 0\}, $$
that is, $V_0 \subseteq V_0^{\perp}$.
 
Obviously, $\{0\} \in \CM$, and $\CM$ is partially ordered by inclusion. If $V_0 \in \CM$, then 
$V_0^{\perp}/ V_0$ inherits a non-degenerate form given by
$$(v_0+V_0,v'_0+V_0)_{V_0^{\perp}/ V_0} := \lan v_0, v'_0\ran, \ \ v_0, v'_0 \in V_0.$$
If $V_1$ is a maximal element of $\CM$, it is not difficult to see that $V_1^{\perp}/V_1$ is a completely 
reducible $ H_E$-module,  and the restriction of $  (\ , \ )_{V_1^{\perp}/ V_1} $ to any $ H_E$-submodule of 
$V_1^{\perp}/V_1$  is non-degenerate.

\begin{df} {\rm Let $V_1$ be a maximal element of $\CM$. Then 
the {\it Witt kernel} of $V$ is $V' :=  V_1^{\perp}/V_1$.} \end{df}

It is not clear from this definition that the Witt kernel is independent of the choice of the maximal element of 
$\CM$. However, we have

\begin{lem}\label{Witt}  If $V_1$, $V_2$ are maximal elements of $\CM$, then there is an $H_{\E }$-isomorphism
$$\Phi: V_1^{\perp}/V_1 \to V_2^{\perp}/V_2,$$
such that
$$ (v_1, v'_1)_{V_1^{\perp}/V_1} = 	(\Phi (v_1 ), \Phi(v'_1) )_{V_2^{\perp}/V_2} , \text{ for all } v_1, v'_1 \in V_1^{\perp}/V_1.$$
\end{lem}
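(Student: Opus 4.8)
The plan is to mimic the classical Witt‑cancellation argument for $G$-invariant forms, working entirely inside the Hopf‑algebraic framework set up above. First I would observe that it suffices to treat the case where one of the two maximal elements contains the other, i.e. I would try to show that if $V_1 \subseteq V_2$ are both in $\CM$ then $V_1 = V_2$; but this is false in general, so instead I would argue by constructing the isomorphism directly and proving it is an isometry. Concretely, given maximal $V_1, V_2 \in \CM$, consider $V_1 \cap V_2^{\perp}$ and $V_2 \cap V_1^{\perp}$, which are $H_{\E}$-submodules since each $V_i$ and each $V_i^{\perp}$ is (the latter because $S$ is the adjoint of the form, as noted before Definition of $\CM$).

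\medskip

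The key steps, in order, would be: (1) Show that the composite $V_1^{\perp}\cap V_2^{\perp} \hookrightarrow V_1^{\perp} \to V_1^{\perp}/V_1$ is a surjection of $H_{\E}$-modules. For surjectivity one uses that $V_1$ is maximal in $\CM$: if some coset of $V_1$ in $V_1^{\perp}$ were not hit, the preimage together with $V_1$ would be totally isotropic and strictly larger, contradicting maximality — more precisely, $V_1^{\perp} = (V_1^{\perp}\cap V_2^{\perp}) + (V_1^{\perp}\cap V_2)$, and $V_1^{\perp}\cap V_2 \subseteq V_1$ because $V_1 + (V_1^{\perp}\cap V_2)$ is an $H_{\E}$-submodule which is totally isotropic (here one uses $V_2$ isotropic and $V_1^{\perp}\cap V_2 \subseteq V_1^{\perp}$), hence equals $V_1$ by maximality. (2) The kernel of that composite is $(V_1^{\perp}\cap V_2^{\perp}) \cap V_1 = V_1 \cap V_2^{\perp}$. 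By the symmetric argument with the roles of $V_1, V_2$ exchanged, $V_1^{\perp}\cap V_2^{\perp}$ also surjects onto $V_2^{\perp}/V_2$ with kernel $V_2\cap V_1^{\perp}$. (3) Identify the two kernels: show $V_1\cap V_2^{\perp} = V_1^{\perp}\cap V_2^{\perp}\cap V_1 = (V_1^{\perp}\cap V_2^{\perp})\cap(V_2^{\perp}\cap V_1)$ in a way that makes the two surjections have the \emph{same} kernel; the cleanest route is to check $V_1\cap V_2^{\perp}$ is exactly the radical of the restriction of $\lan\ ,\ \ran$ to $V_1^{\perp}\cap V_2^{\perp}$, which is manifestly independent of which of the two subscripts we privilege. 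Then (4) the induced map $\Phi: V_1^{\perp}/V_1 \to V_2^{\perp}/V_2$ is an $H_{\E}$-isomorphism, and (5) it is an isometry because for $v, v'$ lifted into $V_1^{\perp}\cap V_2^{\perp}$ the pairing $(v+V_1, v'+V_1)_{V_1^{\perp}/V_1} = \lan v, v'\ran = (v+V_2, v'+V_2)_{V_2^{\perp}/V_2}$ by definition of both induced forms.

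\medskip

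The main obstacle I anticipate is step (1)/(3): verifying that the decompositions $V_i^{\perp} = (V_1^{\perp}\cap V_2^{\perp}) + (V_i^{\perp}\cap V_{3-i})$ hold and that the intersection terms collapse into $V_1$ (resp. $V_2$) as claimed, using maximality in $\CM$ correctly. In the classical group case this is the heart of Witt's cancellation theorem and it goes through purely formally from the facts that (a) $U^{\perp}$ is a submodule whenever $U$ is — which holds here since the antipode is the adjoint of an $H_{\E}$-invariant form, and (b) $U^{\perp\perp} = U$ by finite-dimensionality — both of which are already recorded in the excerpt. So the Hopf‑algebraic setting introduces no genuinely new difficulty beyond bookkeeping; nowhere do we need $S^2 = \mathrm{id}$ or any special feature of bismash products, only that $H_{\E}$-submodules are closed under $\perp$. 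I would close by remarking that, consequently, the Witt kernel $V'$ of Definition is well defined up to isometric $H_{\E}$-isomorphism, which is what is needed for the Thompson-type argument in Section~\ref{witt} and beyond.
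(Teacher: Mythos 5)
Your overall strategy of building $\Phi$ through $W := V_1^{\perp}\cap V_2^{\perp}$ can be made to work, but the justification you give for the crucial surjectivity in step (1) does not hold up. The displayed identity $V_1^{\perp} = (V_1^{\perp}\cap V_2^{\perp}) + (V_1^{\perp}\cap V_2)$ is false in general: already for a hyperbolic plane with trivial action, $V = \E e \oplus \E f$ with $\lan e,f\ran =1$, $\lan e,e\ran=\lan f,f\ran=0$, and $V_1 = \E e$, $V_2=\E f$, one has $V_1^{\perp} = \E e$ while both intersections on the right-hand side are $0$. What your construction actually needs is $V_1^{\perp} = W + V_1$, and your heuristic ("a coset not hit would enlarge $V_1$ to a bigger isotropic submodule") is not an argument: a vector of $V_1^{\perp}$ outside $W+V_1$ need not generate, together with $V_1$, a totally isotropic submodule. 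Moreover, since $W=(V_1+V_2)^{\perp}$ and $W\cap V_1 = V_1\cap V_2^{\perp} = V_1\cap V_2$ (the last equality by the symmetric form of the maximality argument you did carry out correctly), a dimension count shows that $V_1^{\perp}=W+V_1$ is \emph{equivalent} to $\dim V_1=\dim V_2$; so the step you leave unproved is exactly as strong as the equality of dimensions of the two maximal isotropic submodules, which is itself part of what the lemma asserts and is established nowhere in your proposal.

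The gap can be closed without abandoning your route. In $N:=V_1^{\perp}/V_1$, which carries a nondegenerate induced form, the perpendicular of the image of $W$ is $(V_1^{\perp}\cap W^{\perp})/V_1 = \bigl(V_1^{\perp}\cap (V_1+V_2)\bigr)/V_1 = \bigl(V_1+(V_1^{\perp}\cap V_2)\bigr)/V_1 = 0$, using $W^{\perp}=(V_1+V_2)^{\perp\perp}=V_1+V_2$, the modular (Dedekind) law, and your correct observation that $V_1^{\perp}\cap V_2\subseteq V_1$ by maximality of $V_1$; nondegeneracy of the form on $N$ then forces the image of $W$ to be all of $N$, and symmetrically for $V_2$, after which your identification of both kernels with the radical $W\cap W^{\perp}=V_1\cap V_2$ and your step (5) go through verbatim. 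Note finally that the paper does not argue this way at all: its proof consists of observing that Thompson's proof of \cite[Lemma 2.1]{Th} (an induction which factors out $V_1\cap V_2$ and passes to $(V_1\cap V_2)^{\perp}/(V_1\cap V_2)$) transfers word for word, the only Hopf-theoretic inputs being that $U^{\perp}$ is an $H_{\E}$-submodule and $U^{\perp\perp}=U$ — the same two facts you isolate. Your self-contained construction is a legitimate alternative, but only once the surjectivity step is repaired as above.
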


The proof follows exactly the proof of \cite[Lemma 2.1]{Th} for group algebras.

\vs

We next  extend the facts shown in \cite{Th} about $G$-invariant forms on $RG$-lattices to the case of lattices for bismash products. Our proofs follow \cite{Th} very closely. 

Assume that $\L, \K, \pi, R$ and $\k$ are as at the end of Section 2, with $\k= R/ \pi R$.

Fix an irreducible $H_{\L}$-module $V_{\L}$ whose indicator is non-zero. 
Since $\L$ is a splitting field for $H_{\Q}$, so is $\K$, and thus 
$$V := V_{\L} \ot _{\L} \K$$ 
is an irreducible $H_\K$-module. Moreover the bilinear form on $V_{\L}$ extends to a bilinear form on 
$V$, and 
thus there is a non-singular $H_{\K}$-invariant bilinear form 
$\lan \ ,\  \ran$  on $V$, with values in $\K$, which is symmetric or skew-symmetric by \ref{GMind}.	

Recall the basis $\CB$ of $H_{\K}$ from Section 2. 

\begin{df} \label{lat}{\rm Let $V = V_{\L} \ot _{\L} \K$  be as above. Then an 
$R \CB$-{\it lattice} in $V$ is a finitely generated $R \CB$-submodule $L$ of $V$ such that 
$\K L = V$.}
\end{df} 

Let $\CL = \CL_V$ be the family of $R \CB$-sublattices of $V$. If $L \in \CL$, then $L^*$ denotes 
the {\it dual lattice} defined by
$$L^* = \{ l \in V \ | \ \lan L, l \ran \subseteq R \}.$$

Since $R$ is Noetherian, $L^*$ is also an $R \CB$-lattice by \cite[4.24]{CR}. In particular $L^*$ 
is also finitely-generated. We also let 
$$\CL_I  = \CL_{V, I}= \{ R \CB\text{-lattices } L \in \CL_V  \ | \  \lan L, L \ran \subseteq R \}$$
denote the set of integral lattices. 
If $L$ is any element of $\CL$, there is an integer $n$ such that $\pi^n L \in \CL_I$. Obviously, 
$\CL_I$ is partially ordered by inclusion and if $L_1, L_2 \in \CL_I$ with $L_1 \subseteq  L_2$, 
then $L_2 \subseteq L_1^*.$ Thus any chain of sublattices starting with $L_1$ is contained in 
$L_1^*, $ which is a Noetherian $R \CB$-module, and so the chain must stop. 
Thus every element of
$\CL_I$ is contained in a maximal element of $\CL_I$. 

In the following discussion, $L$ denotes a {\it fixed} maximal element of $\CL_I$.

\begin{lem}\label{form} (1) $\pi L^*\subseteq  L.$ 

(2) Let $M = L^*/L$. There is a non singular $R \CB$-invariant form $\lan \ , \ \ran_M$
on $M$, with values in $\k$, defined as follows: if $m_1 , m_2 \in M$, $m_i = x_i+ L$ 
then $\lan m_1,m_2\ran_M := \text{image in $\k$ of }\pi\lan x_1,x_2\ran.$
\end{lem}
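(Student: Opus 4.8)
The plan is to mimic the classical argument for $RG$-lattices (that is, \cite[Lemma 2.5 or thereabouts]{Th}), replacing the group $G$ throughout by the multiplicative set $\CB$ and using that $\CB$ is closed under the antipode $S$ (Lemma \ref{SB}), which is what makes the perpendicular/dual construction compatible with the $R\CB$-action.

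For part (1), I would argue as follows. Since $L$ is integral, $\lan L, L\ran \subseteq R$, so $L \subseteq L^*$; hence $L \subseteq L^* \cap V$ and one checks $L^*$ is an $R\CB$-lattice (already noted in the excerpt, via \cite[4.24]{CR}). Now consider $L + \pi L^*$. It is an $R\CB$-lattice containing $L$, and I claim it is integral: for $x \in L$ and $y, y' \in L^*$ we have $\lan x + \pi y, x + \pi y'\ran = \lan x,x\ran + \pi\lan x, y'\ran + \pi \lan y, x\ran + \pi^2 \lan y, y'\ran$, and each term lies in $R$ — the first since $L$ is integral, the middle two since $\lan L, L^*\ran \subseteq R$ by definition of $L^*$, and the last since $\pi^2 \lan L^*, L^*\ran \subseteq \pi^2 \pi^{-2} R = R$ would need a bound on $\lan L^*, L^*\ran$; more carefully, since $L \subseteq L^*$ we get $L^* \subseteq L^{**}$... the cleanest route is: $\pi\lan y, y'\ran \in R$ because $\lan L^*, L^*\ran \subseteq \pi^{-1} R$, which follows from $\pi L^* \subseteq L^{**} = $ (something containing $L$) — but in fact one shows directly that $\lan L^*, L^* \ran \subseteq \pi^{-1}R$ from maximality or from $\pi L^* \subseteq L$, so there is a small circularity to untangle. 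The standard fix is: $L + \pi L^*$ is integral because $\pi \lan L^*, L^*\ran \subseteq R$, which in turn holds since the form is nonsingular over $\K$ and $L$ has finite $R$-index considerations bound the discriminant; then by maximality of $L$ in $\CL_I$ we get $L + \pi L^* = L$, i.e. $\pi L^* \subseteq L$. I expect this circularity-avoidance to be the main obstacle, and I would resolve it exactly as \cite{Th} does, invoking that $(\K, R, \k)$ is a complete discrete valuation ring so $R$ is a PID and one can diagonalize the form on $L$ relative to $L^*$.

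For part (2), set $M = L^*/L$. Part (1) gives $\pi L^* \subseteq L$, so $M$ is annihilated by $\pi$ and hence is an $\k$-vector space, and it is finitely generated since $L^*$ is; it carries an $R\CB$-action descending to an $H_{\k}$-action because $\CB \cdot L \subseteq L$ and $\CB \cdot L^* \subseteq L^*$ (the latter using that $S$ is the adjoint of the form: $\lan L, b\cdot y\ran = \lan S(b)\cdot L, y\ran \subseteq \lan L, y\ran \subseteq R$ since $S(b) \in \CB$ and $\CB \cdot L \subseteq L$, for $b \in \CB$, $y \in L^*$). Define $\lan m_1, m_2\ran_M$ to be the image in $\k = R/\pi R$ of $\pi\lan x_1, x_2\ran$ for $m_i = x_i + L$. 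This is well-defined: $\pi\lan x_1, x_2\ran \in \pi \lan L^*, L^*\ran \subseteq R$ (using part (1): $\lan L^*, L^*\ran \subseteq \pi^{-1}R$), and changing $x_1$ by an element $\ell \in L$ changes $\pi\lan x_1, x_2\ran$ by $\pi\lan \ell, x_2\ran \in \pi R$ since $\ell \in L$, $x_2 \in L^* $ gives $\lan \ell, x_2\ran \in R$; symmetrically in the second slot. It is $R\CB$-invariant (equivalently $H_{\k}$-invariant) because the form $\lan\ ,\ \ran$ on $V$ is $H_{\K}$-invariant and $\pi$ and $\eps$ behave well under reduction mod $\pi$ — more precisely, $\sum \lan h_1 m_1, h_2 m_2\ran_M$ reduces $\pi \sum \lan h_1 x_1, h_2 x_2\ran = \pi \eps(h)\lan x_1,x_2\ran$ mod $\pi R$, matching $\eps(h)\lan m_1,m_2\ran_M$. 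Finally, nonsingularity of $\lan\ ,\ \ran_M$: the radical of the form on $M$ corresponds to $\{y \in L^* \mid \pi\lan y, L^*\ran \subseteq \pi R\} = \{y \in L^* \mid \lan y, L^*\ran \subseteq R\} = L^{**} = $ (the lattice dual to $L^*$); and $L^{**} = L$ because... over a complete DVR $L^{**} = L$ always holds for a lattice in a space with nonsingular form, provided $L$ is maximal integral — or more simply, if the radical were strictly larger than $L$, its preimage in $L^*$ would be an integral lattice strictly containing $L$, contradicting maximality. Hence $\lan\ ,\ \ran_M$ is nonsingular, completing the proof. The bulk of this is routine bookkeeping with the valuation; the only genuine subtlety, as flagged, is pinning down $\lan L^*, L^*\ran \subseteq \pi^{-1}R$ and $L^{**} = L$, which I would handle precisely as in \cite{Th}.
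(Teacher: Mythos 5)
Your part (2) is essentially the paper's argument and is fine (well-definedness from $\lan L,L^*\ran\subseteq R$ together with (1), and nonsingularity from $L^{**}=L$; your alternative remark that the radical's preimage in $L^*$ would be an integral $R\CB$-lattice strictly containing $L$ also works and even avoids quoting $L^{**}=L$). The genuine gap is in part (1). You try to show $L+\pi L^*$ is integral, hit the term $\pi^{2}\lan y,y'\ran$ with $y,y'\in L^*$, and correctly observe that you would need $\lan L^*,L^*\ran\subseteq\pi^{-1}R$, which is equivalent to the statement $\pi L^*\subseteq L$ you are trying to prove; you then defer the circularity to ``discriminant bounds'' or ``diagonalizing the form over the PID $R$.'' That deferral does not close the argument: $L$ is maximal only among integral \emph{$R\CB$-lattices}, not among all integral $R$-lattices, so the classical structure theory of maximal integral lattices over a complete discrete valuation ring (diagonal or Jordan splittings, discriminant estimates) cannot be invoked directly --- the larger integral $R$-lattices such arguments produce need not be $\CB$-stable, and maximality in $\CL_I$ says nothing about them.

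The missing idea (which is what the paper, following Thompson, actually does) is a descent in the power of $\pi$ rather than working with $L+\pi L^*$ directly. Let $h\ge 0$ be the least integer with $\pi^{h}L^*\subseteq L$ (it exists since $L^*$ is finitely generated and $\K L=V$), and suppose $h\ge 2$. Put $L_1=L+\pi^{h-1}L^*$; this is an $R\CB$-lattice because $L^*$ is (by the adjointness of $S$ and $S(\CB)\subseteq\CB$, exactly as you checked), and it is integral: for $u_i=l_i+\pi^{h-1}l_i^*$ the cross terms lie in $\pi^{h-1}\lan L,L^*\ran\subseteq R$, while the problematic term is
$$\pi^{2h-2}\lan l_1^*,l_2^*\ran=\pi^{h-2}\lan \pi^{h}l_1^*,\,l_2^*\ran\in\pi^{h-2}R\subseteq R,$$
using $\pi^{h}L^*\subseteq L$ (true by the choice of $h$) and $h\ge 2$. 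Since $h$ is minimal, $\pi^{h-1}L^*\not\subseteq L$, so $L_1\supsetneq L$, contradicting the maximality of $L$ in $\CL_I$; hence $h\le 1$ and $\pi L^*\subseteq L$. Note this uses only the defining property $\lan L,L^*\ran\subseteq R$ and the minimality of $h$, so there is no circularity; with (1) established, the rest of your part (2) goes through as written.
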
 

\begin{proof} (1) Let $h$ be the smallest integer $\geq 0$ such that $\pi^h L^* \subseteq L$. 
If $h\leq 1$, then (1) holds. So suppose $h\geq 2.$

Let $L_1= L+\pi^{h-1} L^*$. Then $L_1 \in \CL$. Moreover, if $u_1,u_2 \in L_1$, say
$u_i = l_i + \pi^{h-1} l_i^*$, $l_i \in L$, $l_i^* \in L^*$,  then
$$\lan u_1, u_2\ran = \lan l_1, l_2\ran + \pi^{h-1} (\lan l_1,  l_2^*\ran + \lan l_1^*, l_2\ran) +
\pi^{h-2} \lan \pi^h l_1^*, l_2^*\ran \in R$$
by definition of $L^*$ and of $h$. Thus, $L_1 \in \CL_I$. Since $L\subseteq L_1$, this violates 
the maximality of $L$. So (1) holds.

(2) If $l_1^*, l_2^* \in  L^*$, then $\pi l_1^* \in L$ , so $\lan\pi l_1^*, l_2^*\ran \in R$. Since 
$\lan L, L^*\ran$ and $\lan L^*,L\ran$ are contained in $R$, and since $\pi$ is a generator 
for the maximal ideal of $R$, it follows that $\lan \ , \ \ran_M$ is well defined. To see that 
this form is non singular, suppose 
$l^* \in L^*$ and $\lan l^*,L^*\ran = 0$. Then $l^* \in L^{**}  =L$, so $l^*+L=0$ in $M$. This proves (2).
\end{proof}

\begin{lem} \label{forY} $\{ l \in L\ | \ \lan l,L\ran \subseteq\pi R \}=\pi L^*.$
\end{lem}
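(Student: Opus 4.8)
The plan is to prove the set equality $\{ l \in L\ | \ \lan l,L\ran \subseteq\pi R \}=\pi L^*$ by establishing the two inclusions separately. Write $Y := \{ l \in L\ | \ \lan l,L\ran \subseteq\pi R \}$ for the left-hand set. First I would observe that $Y$ is indeed an $R\CB$-submodule of $L$: if $l \in Y$ and $h \in \CB$, then for any $l' \in L$ we have $\lan h\cdot l, l'\ran = \lan l, S(h)\cdot l'\ran$, and since $S(h) \in \CB$ by Lemma~\ref{SB}(1) and $L$ is an $R\CB$-module, $S(h)\cdot l' \in L$, so $\lan h\cdot l, l'\ran \in \pi R$; closure under $R$-scaling and addition is immediate.

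For the inclusion $\pi L^* \subseteq Y$: by Lemma~\ref{form}(1) we have $\pi L^* \subseteq L$, so any element of $\pi L^*$ lies in $L$; and if $x = \pi l^*$ with $l^* \in L^*$, then for $l' \in L \subseteq L^*$ (using that $L$ is integral, hence $L \subseteq L^*$, which follows since $\lan L, L\ran \subseteq R$) we get $\lan \pi l^*, l'\ran = \pi \lan l^*, l'\ran \in \pi R$ because $\lan L^*, L\ran \subseteq R$ by definition of $L^*$. Hence $\pi L^* \subseteq Y$.

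For the reverse inclusion $Y \subseteq \pi L^*$: let $l \in Y$, so $l \in L$ and $\lan l, L\ran \subseteq \pi R$. Then $\lan \pi^{-1} l, L\ran \subseteq R$, which by the definition of the dual lattice means $\pi^{-1} l \in L^*$, i.e. $l \in \pi L^*$. This direction is essentially formal once one unwinds the definitions; the only thing to be slightly careful about is that $\pi^{-1} l$ makes sense as an element of $V$ (which it does, since $\K L = V$ and we may scale inside $V$).

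I do not expect a serious obstacle here — the statement is a routine lattice-theoretic fact, and the whole point is that it follows from $\pi L^* \subseteq L$ (Lemma~\ref{form}(1)) together with the definitions of $L^*$ and of an integral lattice. The one conceptual point worth checking, rather than an obstacle, is that $Y$ genuinely is an $R\CB$-submodule, since this is presumably why the lemma is being recorded; that uses the antipode-closure of $\CB$ (Lemma~\ref{SB}(1)) exactly as in the argument that $U^{\perp}$ is a submodule earlier in Section~\ref{witt}. So the proof is: show $Y$ is a submodule via $S$-invariance of the form, then chase the two inclusions through the definitions of $L^*$ and $\CL_I$, invoking Lemma~\ref{form}(1) for the nontrivial containment $\pi L^* \subseteq L$.
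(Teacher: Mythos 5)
Your proof is correct and follows essentially the same route as the paper: both inclusions are chased through the definition of $L^*$, with Lemma \ref{form}(1) supplying $\pi L^* \subseteq L$ and the containment $\lan l,L\ran \subseteq \pi R \Rightarrow \pi^{-1}l \in L^*$ giving the nontrivial direction. Your additional observation that the set is an $R\CB$-submodule (via $S$-adjointness of the form) is correct but not part of the paper's proof, which records only the set equality.
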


\begin{proof} By Lemma \ref{form}(1) , $\pi L^* \subseteq L.$  By definition of $L^*$,  
$\pi L^* \subseteq \{ l \in L \ | \ \lan l,L\ran \subset \pi R \}.$ Thus it suffices to show that if $l \in L$ 
and $\lan l,L\ran \subseteq \pi R$, then $l \in \pi L^*.$ This is clear, since 
$\lan 1/{\pi} l, L \ran \subseteq R,$ so that by the definition of $L^*$, we have $1/{\pi} l \in  L^*.$
\end{proof}

%-------------Section 6. Indicators and Brauer characters-------------------------------------

\section{ Indicators and Brauer characters}

In this section we combine our work in the previous sections to prove the analog of a theorem of Thompson. 

\begin{thm} {\rm{Thompson} \cite{Th}}   Let $k$ be an algebraically closed field of odd characteristic, 
let $G$ be a finite group, and let $W$ be an irreducible $\k G$-module. If $W$ has non-zero 
Frobenius-Schur indicator, then $W$ is a composition factor (of odd multiplicity) in the reduction 
mod $p$ of an irreducible $\K G$-module with the same  indicator as $W.$
\end{thm}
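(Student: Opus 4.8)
The plan is to mimic Thompson's original argument \cite{Th} for group algebras, using the machinery built up in Sections 4 and 5 to replace the group-theoretic inputs. Since the statement to be proved is Thompson's theorem itself (stated verbatim for $\k G$), and everything earlier in the excerpt has been carefully set up as the $H_{\k}$-analog, the ``proof'' in the paper is presumably just a recollection of Thompson's proof whose generalization to $H_{\k}$ then follows by substituting the Hopf-algebra versions of each ingredient. So I would first fix an irreducible $\K G$-module $V$ with $\nu(V)=\nu(W)\neq 0$ lifting $W$ — more precisely, start from $W$, note $W\cong W^*$ so $\chi_W = \chi_W\circ S$ (here just $\chi_W$ a genuine character), and use Brauer character theory (Theorem \ref{indep}, Proposition \ref{CR17.5}(3)) to locate an ordinary irreducible $\chi$ of $\K G$ whose restriction to $G_{p'}$ has $\phi_W$ appearing with odd coefficient; then refine the choice so that $\chi$ is itself self-dual. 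The key point is that one can pick $V$ in the same Witt class: one passes through the virtual character relation $\chi|_{G_{p'}}=\sum_j d_j\phi_j$ and shows, using the fact that the indicator is a $\{0,\pm1\}$-valued function additive on the Witt kernel, that some self-dual constituent must carry the same indicator as $W$.

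The heart of the matter is the Witt-kernel reduction. Given the self-dual $V$ over $\K$ with nonsingular invariant form (symmetric or skew by Theorem \ref{GMind}), choose a maximal integral lattice $L\in\CL_I$ as in Section 5, set $M = L^*/L$, and invoke Lemma \ref{form}(2): $M$ carries a nonsingular $G$-invariant $\k$-valued form of the same symmetry type. By Lemma \ref{forY}, $\overline{M} := L/\pi L^*$ and $\pi L^*/\pi L$ are the relevant pieces; the point is that the composition series of the reduction $L/\pi L$ breaks into $L/\pi L^* \cong$ (something with the induced form) together with totally isotropic radical pieces that pair up. The module-with-form $M$ is a sum of a hyperbolic part (pairs $U\oplus U^*$ of isotropic submodules) plus a ``Witt reduction'' piece, and because $p$ is odd a self-dual irreducible constituent of the same type as $V$ occurs in the Witt piece with odd multiplicity; its reduction is exactly $W$. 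Concretely I would run the argument: the multiplicity of $W$ as a composition factor of $L/\pi L$ equals the coefficient $d_j$ above (decomposition number), hyperbolic summands contribute even multiplicity, the isotropic radical filtration contributes in dual pairs (even unless self-dual), and what remains forces $W$ itself to appear an odd number of times with the claimed indicator.

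The steps, in order: (1) reduce to producing a self-dual ordinary irreducible $V$ over $\K$ with $d(\chi_V|_{G_{p'}}, \phi_W)$ odd, via Theorem \ref{indep} and Proposition \ref{CR17.5}(3); (2) equip $V$ with its nonsingular $G$-invariant form of type $\nu(W)$ using Theorem \ref{GMind}; (3) choose a maximal integral $RG$-lattice $L$ and form $M=L^*/L$ with its $\k$-valued form (Lemma \ref{form}); (4) decompose $(M,\langle\ ,\ \rangle_M)$ into hyperbolic part plus Witt kernel, track composition multiplicities of $W$ modulo $2$, and use oddness of $p$ to conclude $W$ sits in the Witt part with odd multiplicity and indicator $=\nu(W)$; (5) translate multiplicities back to the statement about reduction mod $p$ of $V$. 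For the $H_{\k}$-generalization one replaces $G_{p'}$ by $\CB_{p'}$, Lemma \ref{factor} by Lemma \ref{Hfactor}, Proposition \ref{CR17.5} by Proposition \ref{CR17.5b}, the group lattices by $R\CB$-lattices and Section 5's Lemmas \ref{Witt}, \ref{form}, \ref{forY}, and the decomposition-number input by the block decomposition $\hat D = \mathrm{diag}(D_{x_1},\dots,D_{x_s})$.

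The main obstacle is step (4): controlling the parity of the multiplicity of $W$ as a composition factor of the reduction when the invariant form degenerates mod $p$. Thompson's insight is that a nondegenerate form over $\k$ of a given symmetry type forces self-dual constituents of the ``wrong'' type to appear in cancelling (hyperbolic) pairs, so their contribution is even; isolating the surviving odd contribution and identifying its type with $\nu(W)$ requires handling the radical filtration of $L/\pi L$ carefully and using $p\neq 2$ crucially (so that ``symmetric'' and ``skew'' do not collapse and $2$ is invertible when splitting off hyperbolic planes). Everything else is a faithful transcription of \cite{Th} once the lattice and Brauer-character infrastructure of Sections 4--5 is in place.
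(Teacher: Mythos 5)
Your overall architecture does follow the paper's (and Thompson's): first produce a self-dual ordinary irreducible $\hat\chi$ whose decomposition number on the given self-dual Brauer character is odd, then transfer the type through a maximal integral lattice and Witt kernels. Your step (4) is essentially the paper's Proposition \ref{old3.3}: with $X=L^*/\pi L^*$, $Y=L/\pi L^*$, $Z=L^*/L$ and the forms from Lemmas \ref{form} and \ref{forY}, one compares the multiplicity of $P$ in a maximal isotropic submodule $Y_1$ with its multiplicity in $Y/Y_1^{\perp}$ using $P\cong P^*$, so the parity survives into $Y'\oplus Z'$; no actual splitting of hyperbolic planes (hence no invertibility of $2$) is needed there, and the type identification comes from complete reducibility of the Witt kernel plus uniqueness of the invariant form on $P$.

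The genuine gap is your step (1). The existence of a \emph{self-dual} ordinary irreducible character with \emph{odd} decomposition number on $\phi_W$ does not follow from Theorem \ref{indep} and Proposition \ref{CR17.5}(3), and ``refine the choice so that $\chi$ is itself self-dual'' is precisely the hard point, not a refinement. A priori every self-dual $\chi_i$ could have even decomposition number on $\phi_W$. The paper (Theorem \ref{main} and its two lemmas) rules this out by a parity argument that you never invoke: Brauer's Theorem \ref{brauer} (extended in Theorem \ref{Hbrauer}) gives that $\det C$ is a power of the odd prime $p$, hence odd; pairing the non-self-dual characters via the permutation $\s$ yields $\det C \equiv \det(C_0)\det(C_1) \pmod 2$, so $\det(C_1)$, the block indexed by the self-dual Brauer characters, is odd; and if all self-dual $\chi_i$ had even $d_{ij}$ for the self-dual $\phi_j$, then (since contributions from dual pairs $\chi_{2i-1},\chi_{2i}$ coincide) the $j$-th row of $C_1$ would be even, contradicting $\det(C_1)$ odd. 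Without this Cartan-determinant input your argument never gets off the ground; also note that your opening move (``fix $V$ with $\nu(V)=\nu(W)$'') assumes the conclusion, whereas the equality of indicators must come out of the Witt-kernel step after $\hat\chi$ has been produced by the parity argument. Finally, in the bismash setting this whole parity argument is run block-by-block inside $RF_{x_q}$ using $\hat D_{x_q}=D_{x_q}$, which is where the block-diagonal decomposition you cite actually enters.
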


By reduction mod $p$, we mean to use the $p$-modular system $(\K, R, \k)$ as described in 
Example \ref{mainex}, and then the induced decomposition map as in (\ref{decompmodp}).

We first prove the analog of \cite[Lemma 3.3]{Th}. Recall the notation in Section 4:  

$V_{\L}$ is a fixed irreducible $H_{\L}$-module which is self-dual and thus 
$V = V_{\L} \ot _{\L} \K$ is an irreducible self-dual $H_{\K}$-module, with character $\chi$. $V$ has a 
non-degenerate $H_{\K}$-invariant bilinear form $\lan \ ,\  \ran$ with values in $\K$, which is symmetric or skew-symmetric by Theorem \ref{GMind}.	

As in Section \ref{witt}, $\CL$ is the family of $R \CB$-sublattices of $V$ and $\CL_I $ is the 
subset of integral lattices.
Let $L$ denote a {\it fixed} maximal element of $\CL_I$ with dual lattice $L^*$.

Consider the following $H_{\k}$-modules: let $X = L^*/ \pi L^* $, 
$Y = L/\pi L^*$, and $Z = L^*/L$. Note that $Y$ is a submodule of $X$. Then there is 
an exact sequence of $H_{\k}$-modules 
$$
0 \to Y\to X\to Z \to 0.
$$

Using the non-degenerate form on $V$, it follows from the argument in Lemma \ref{form}(2) that both 
$X$ and $Z$ have a non-degenerate form; moreover Lemma \ref{forY} 
gives us a non-degenerate form on $Y$. These three forms are all of the same type, that is, either all 
are symmetric or all are skew symmetric, and the type is given by the indicator $\nu(\chi)$ of $V$.

\begin{prop} \label{old3.3} Let $V$, $L$, $X$, $Y$, and $Z$ be as above. Suppose that $P$ is an irreducible $H_{\k}$-module with Brauer 
character $\hat{\phi}$, such that

(1)  $\hat{\phi}^* = \hat\phi$; 

(2) $d(\hat{\chi} |_{\CB_{p'}}, \hat{\phi})$ is odd. 

\noindent Let $Y'$, $Z'$ be the Witt kernels of $Y$, $Z$ respectively. Then the multiplicity of $P$ in 
$Y' \oplus Z'$ is odd. Consequently $P$ has the same type as $V$. 
\end{prop}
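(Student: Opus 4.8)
The plan is to mirror the argument of \cite[Lemma 3.3]{Th} exactly, now that the necessary infrastructure (Witt kernels for $H_{\k}$-modules from Section \ref{witt}, Brauer characters for $H_{\k}$ from Section \ref{Hbrauer}, and the decomposition map $\hat d$) has been set up. First I would record the key numerical input: since $[\overline{L}] = [L/\pi L^*]$-type reductions compute $\hat d([\hat V])$, the multiplicity of the simple module $P$ in the reduction mod $p$ of $V$ equals $d(\hat\chi|_{\CB_{p'}}, \hat\phi)$, which is odd by hypothesis~(2). Comparing the exact sequence $0 \to Y \to X \to Z \to 0$ with the further filtration $0 \to \pi L^*/\pi L^* \subseteq L/\pi L^* = Y$, and noting $X = L^*/\pi L^*$ reduces $V$ while $Y = L/\pi L^*$ and $Z = L^*/L$, additivity of Brauer characters (Proposition \ref{CR17.5b}(2)) gives that the multiplicity of $P$ as a composition factor of $X$ equals (mult.\ in $Y$) $+$ (mult.\ in $Z$). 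One then identifies $Z \cong L^*/L$ with its own dual and checks $Y$ and $Z$ carry nondegenerate forms of the same type as $\nu(\chi)$, as already observed just before the proposition.

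Next I would pass to Witt kernels. The point is that for a module $N$ with a nondegenerate invariant form, $N$ and its Witt kernel $N'$ have the same composition factors modulo pairs $U, U^*$ with $U \not\cong U^*$, and in particular the same multiplicity \emph{mod 2} of any self-dual composition factor. So since $\hat\phi^* = \hat\phi$ (hypothesis (1)), the multiplicity of $P$ in $Y$ is congruent mod $2$ to its multiplicity in $Y'$, and likewise for $Z$ and $Z'$. Combining with the additivity relation above, (mult.\ of $P$ in $Y' \oplus Z'$) $\equiv$ (mult.\ of $P$ in $X$) $\equiv d(\hat\chi|_{\CB_{p'}},\hat\phi)$ mod $2$, which is odd.

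Finally, to conclude that $P$ has the same type as $V$: $Y' \oplus Z'$ is a completely reducible $H_{\k}$-module carrying a nondegenerate invariant form of the same type as $\nu(\chi)$ (each of $Y'$, $Z'$ does, being a Witt kernel of a module whose form has that type). On a completely reducible module with a nondegenerate form, the isotypic component of a self-dual simple $P$ inherits a nondegenerate form; if $P$ occurs with odd multiplicity, that component cannot be a sum of hyperbolic planes $P \oplus P^* = P \oplus P$, so $P$ itself must carry a nondegenerate invariant form of the same type. Hence $\nu(P) = \nu(V)$ by Theorem \ref{GMind}. Here I would invoke that $\k$ has odd characteristic, so that a nondegenerate symmetric (resp.\ alternating) form on an odd-multiplicity self-dual constituent forces the same symmetry type — the standard parity argument for forms over a field of characteristic $\neq 2$.

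The main obstacle I anticipate is purely bookkeeping: verifying carefully that the three forms on $X$, $Y$, $Z$ (built from Lemmas \ref{form} and \ref{forY}) are genuinely $H_{\k}$-invariant and of the stated common type, and that the Witt-kernel construction of Section \ref{witt} behaves well enough that "same composition factors mod hyperbolic pairs" holds in the $H_{\k}$-module category exactly as in the group case. Since Lemma \ref{Witt} already gives well-definedness of the Witt kernel and the Section \ref{witt} setup works over an arbitrary field for $H_{\E}$, this should go through verbatim as in \cite{Th}, with $\CB_{p'}$ playing the role of the $p$-regular classes; the only genuinely new ingredient is that self-duality of Brauer characters is governed by $S$ via Lemma \ref{Hlincom}, which is what makes hypothesis (1) the correct analog of $P^* \cong P$.
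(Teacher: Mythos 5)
Your proposal is correct and follows essentially the same route as the paper's proof (both transcribe Thompson's Lemma 3.3): reduce the lattice $L^*$ to get $X$, use the decomposition map to see that the multiplicity of $P$ in $X$, hence in $Y\oplus Z$, is odd, and pass to the Witt kernels using $Y_1^{*}\cong Y/Y_1^{\perp}$ together with $P^{*}\cong P$ to preserve parity. The only minor divergence is the last step: the paper concludes $\nu(P)=\nu(V)$ by restricting the Witt kernel's form to a copy of $P$ (nondegenerate by the maximality property recorded in Section \ref{witt}) and invoking uniqueness of the invariant form on $P$, whereas you run an isotypic-component parity (hyperbolic-plane) argument; both are valid in odd characteristic.
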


\begin{proof}  We let $M = L^*$; then $X = L^* / \pi L^*  =  M / \fp M.$ By Proposition 
\ref{CR17.5b}, the restriction $\chi |_{R\CB_{p'}}$ is the Brauer character of the $H_{\k}$-module 
$\overline{ M} := M/ \fp M$.

By hypothesis, the multiplicity of $\hat{\phi}$ in $\hat{\chi} |_{\CB_{p'}}$ is odd, and thus using the decomposition map, the multiplicity of $P$ in $X = \overline{M}$ is odd. Thus the multiplicity of $P$ 
in $Y \oplus Z$ is odd. Since 
$\hat{\phi}\circ S = \hat\phi$, it follows from the definition of Brauer characters that also 
$\hat{\psi} \circ S = \hat \psi$ on $P$, and so $P \cong P^*$ as $H_{\k}$-modules. 

As in Section \ref{witt}, let $Y_1$ be an $H_{\k}$-submodule of $Y$ which is maximal subject to 
$\langle Y_1, Y_1 \rangle_Y = 0$. Then the multiplicity of $P$ in $Y_1$ equals the multiplicity 
of $P$ in $Y /{Y_1}^{\perp}$ (since $Y_1^* \cong Y /{Y_1}^{\perp}$ and $P^* =P$), and so the parity 
of the multiplicity of $P$ in $Y$ equals the parity of the multiplicity of $P$ in the Witt kernel 
$Y' = {Y_1}^{\perp}/ Y_1$. 

The same argument applies to $Z$, and thus the multiplicity of $P$ in 
$Y' \oplus Z'$ is odd.

For the second part, by Section \ref{witt} we know that $Y'$ is completely reducible, and thus if $P$ appears in $Y'$, the non-degenerate 
bilinear form on $Y'$  restricts to a non-degenerate form on $P$. By uniqueness, this form must 
agree with the given form on $P$, and thus $P$ and $Y'$, and so $P$ and $Y$, have the same type. 

Similarly, if $P$ appears in $Z'$, then $P$ and $V$ have the same type. But since $P$ appears an odd number of times in $Y' \oplus Z'$, it must appear in either $Y'$ or $Z'$. 
\end{proof}

\begin{thm} \label{main} Let $\hat{P}$ be a self-dual simple $H_{\k}$-module, and let $\hat\phi$ be its Brauer character. 
Then there is an irreducible $\K$-character $\hat{\chi}$ of $H_{\K}$ such that 

(1) $\hat{\chi}^* = \hat\chi,$ and

(2) $d (\hat{\chi} |_{\CB_{p'}}, \hat \phi)$ is odd.

\noindent Moreover if $\hat\chi$ is any irreducible $\K$-character of $H_{\K}$ satisfying (1) and (2), 
then $\nu(\hat\chi) = \nu(\hat{P}).$
\end{thm}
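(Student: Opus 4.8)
The plan is to deduce Theorem \ref{main} from the machinery already assembled, in particular Proposition \ref{old3.3}, Theorem \ref{Hbrauer}, and Theorem \ref{Hindep}. The existence statement (parts (1) and (2)) should come from a parity/counting argument: I would first show that every self-dual simple $H_{\k}$-module $\hat P$ must occur with odd decomposition multiplicity in the reduction of \emph{some} self-dual irreducible $H_{\K}$-module. The key tool is that $Det(\hat C)$ is a power of $p$, hence odd, so $\hat C = \hat D^t \hat D$ is invertible mod $2$. Working over $\mathbb{F}_2$, the matrix $\hat D \bmod 2$ therefore has full column rank; combined with the fact (Lemma \ref{Hlincom}, together with the self-duality hypothesis and the compatibility of $S$ with the decomposition map) that the self-dual characters $\hat\chi_i$ span the relevant space and that duality permutes rows of $\hat D$, one gets that the submatrix of $\hat D \bmod 2$ with rows indexed by self-dual $\hat\chi_i$ still has the column corresponding to $\hat\phi$ nonzero. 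That nonzero entry is exactly an odd decomposition number $d(\hat\chi|_{\CB_{p'}}, \hat\phi)$ for a self-dual $\hat\chi$, giving (1) and (2).

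For the "moreover" clause — that \emph{any} self-dual irreducible $\hat\chi$ satisfying (1) and (2) has $\nu(\hat\chi) = \nu(\hat P)$ — I would invoke Proposition \ref{old3.3} essentially verbatim. Given such a $\hat\chi$, pick an $R\CB$-lattice $L$ which is maximal integral for the associated invariant form on $V = V_{\L}\ot_{\L}\K$ (a form exists by Theorem \ref{GMind} since $\nu(\hat\chi)\neq 0$, as self-duality forces the indicator to be $\pm 1$), form $X = L^*/\pi L^*$, $Y = L/\pi L^*$, $Z = L^*/L$, and apply Proposition \ref{old3.3} with $P = \hat P$, $\hat\phi$ its Brauer character: hypotheses (1) and (2) there are precisely our (1) and (2). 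The conclusion is that $\hat P$ occurs with odd multiplicity in $Y'\oplus Z'$, hence in $Y'$ or in $Z'$; in the former case $\hat P$ and $Y$ (type $= \nu(\hat\chi)$) share a type, in the latter $\hat P$ and $V$ (type $= \nu(\hat\chi)$) share a type. Either way $\nu(\hat P) = \nu(\hat\chi)$. One wrinkle to address: $\nu(\hat P)$ itself must be defined, which needs $\hat P \cong \hat P^*$ — that is the standing hypothesis — and then Theorem \ref{GMind} applies to $H_{\k}$ since $S^2 = \mathrm{id}$ for any bismash product.

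The main obstacle I anticipate is the existence half, specifically making the $\mathbb{F}_2$ linear-algebra argument airtight: I need that restricting attention to self-dual rows of $\hat D$ does not kill the column of $\hat\phi$ modulo $2$. The clean way is to pair rows $\hat\chi_i$ with their duals $\hat\chi_i^*$: non-self-dual characters come in pairs $\{\hat\chi_i, \hat\chi_i^*\}$ with $d(\hat\chi_i|_{\CB_{p'}}, \hat\phi) = d(\hat\chi_i^*|_{\CB_{p'}}, \hat\phi)$ (using $\hat\phi^* = \hat\phi$ and that $S$ intertwines the decomposition map — this is where Lemma \ref{Hlincom} and the stability of $\CB_{p'}$ under $S$ enter), so their contributions cancel mod $2$; since the full column of $\hat D$ applied against $\hat\phi$ recovers (via $\hat C$ invertible mod $2$, or more directly via $\hat D$ having full column rank mod $2$) a nonzero vector, the surviving self-dual rows must carry an odd entry. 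I would present this pairing argument carefully, perhaps as a short preliminary lemma, since it is the one genuinely new combinatorial point; everything else is a transcription of Thompson's argument through the dictionary provided by Sections \ref{Hbrauer} and \ref{witt}.
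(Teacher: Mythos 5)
Your handling of the ``moreover'' clause is exactly the paper's: once a $\hat\chi$ satisfying (1) and (2) exists, Proposition \ref{old3.3} applied to $V$, a maximal integral lattice $L$, and the modules $X, Y, Z$ gives $\nu(\hat\chi)=\nu(\hat P)$, and that half of your argument is fine. The gap is in the existence half. From $\det\hat C$ odd you correctly conclude that $\hat D$ has full column rank mod $2$, so the column of $\hat D$ indexed by $\hat\phi$ is nonzero mod $2$; but the inference ``the surviving self-dual rows must carry an odd entry'' does not follow. Full column rank is a property of the whole matrix and does not prevent all odd entries of that one column from lying in non-self-dual rows, where they occur in equal dual pairs (since $\hat\phi^*=\hat\phi$ forces $d(\hat\chi_i|_{\CB_{p'}},\hat\phi)=d(\hat\chi_i^*|_{\CB_{p'}},\hat\phi)$); such a column is perfectly compatible with $\hat D$ being injective mod $2$. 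What your row-pairing actually proves is the paper's \emph{second} lemma: if $d_{ij}$ were even for every self-dual $\hat\chi_i$, then $c_{jk}=\sum_i d_{ij}d_{ik}$ is even for every \emph{self-dual} $k$ (the cancellation over a dual pair of rows needs $\hat\phi_k$ self-dual too, so it says nothing about $c_{jk}$ for non-self-dual $k$). To get a contradiction from this you must know that the row, indexed by $\hat\phi$, of the self-dual-by-self-dual block $C_1$ of the Cartan matrix cannot consist entirely of even entries --- and that is not a formal consequence of $\det\hat C$ being odd; it is exactly what the paper's \emph{first} lemma supplies.

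Concretely, the paper orders the Brauer characters so that the non-self-dual ones come in dual pairs followed by the self-dual ones, writes the Cartan matrix in block form with diagonal blocks $C_0$ and $C_1$, and uses the symmetry $\sigma^{-1}C_x\sigma=C_x$ (with $\sigma=(1,2)(3,4)\cdots(2n_1-1,2n_1)$) to pair off the permutations in the determinant expansion that move the self-dual index set, obtaining $\det C_x\equiv\det C_0\,\det C_1 \pmod 2$ and hence $\det C_1$ odd; only then does your parity computation on a row of $C_1$ close the argument. (Your ``$\mathbb{F}_2$ linear algebra'' can in fact be made airtight without the determinant expansion: over $\mathbb{F}_2$ the subspace spanned by the vectors $e_i+e_{i^*}$, for the dual pairs, is invariant under $\hat C$, and invertibility of $\hat C$ mod $2$ then forces $C_1$ to be invertible mod $2$ --- but some such argument must be supplied; as written your step is unsupported.) A minor further difference: the paper runs this within a single $p$-block of $RF_{x_q}$, following Thompson, whereas you work with the full block-diagonal $\hat C$; that is harmless provided you record that $*$ permutes $Irr(H_{\K})$ and $IBr(H_{\k})$ compatibly with the decomposition matrix, which you do via the stability of $\CB_{p'}$ under $S$.
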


To prove the theorem, it will suffice to show that $\hat\chi$ exists, since the equality 
$\nu(\hat\chi) = \nu(\hat{P})$ follows from Proposition \ref{old3.3}.

We follow the outline of Thompson's argument, although we must look at the $RF_x$-blocks separately. We know that for some $x = x_q$, $\hat{P}$ is 
induced from a simple $\k F_x$-module $P$.  Let $B_x$ be the block of $RF_x$ containing the 
Brauer character $\phi$ of $P$, let $\{ \chi_1, \ldots ,\chi_m \}$ be all of the irreducible 
$\K$-characters in $B_x$, and let $\{ \phi_1, \ldots ,\phi_n \}$ be all of the irreducible Brauer 
characters in $B_x$. 

Let $D_x$ be the decomposition matrix of the $\chi_i$ with respect to the $\phi_j$, and 
$C_x = D_x^t D_x$ the Cartan matrix. From Brauer's theorem \ref{brauer}, $Det(C_x)$ is a power of $p$ and so is odd since $p$ is odd. 

Lifting this set-up to $H_{\K}$, $\hat{B}_x$ is the block of $R\CB$ containing the Brauer character 
$\hat\phi$ of $\hat{P}$, $\{ \hat\chi_1, \ldots ,\hat\chi_m \}$ are all the irreducible $\K$ characters in 
$\hat{B}_x$, and $\{ \hat\phi_1, \ldots ,\hat\phi_n \}$ are all of the irreducible Brauer characters in 
$\hat{B}_x$. 

Choose notation so that $n = 2n_1 + n_2$, where 
$\{\hat\phi_1, \hat\phi_2 \},  \{\hat\phi_3, \hat\phi_4 \}, \ldots, \{\hat\phi_{2n_1 -1}, \hat\phi_{2n_1}\}$, 
are pairs of non self-dual characters, that is, $(\hat\phi_{2i -1})^* =  \hat\phi_{2i}$, and $\hat\phi_{2n_1+1}, \ldots, \hat\phi_n$ are self-dual. By hypothesis, $n_2 \neq 0$ since $\hat{\phi}$ is 
one of the $\hat{\phi}_i$. 

Write $C_x$ in block form as 
$C_x = \left [\begin{array}{ll}
C_0 & C_2\\
C_2^t & C_1  \end{array}\right]$, where $C_0$ is $2n_1 \times 2 n_1$ and $C_1$ is 
$n_2 \times n_2.$

 The Theorem will now follow from the next two lemmas:

\begin{lem} $Det (C_1)$ is odd.
\end{lem}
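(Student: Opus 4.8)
The plan is to exploit the relation $C_x = D_x^t D_x$ together with the action of the duality operation ${}^*$ on Brauer characters, mirroring Thompson's original argument but keeping in mind that here $D_x$ and $C_x$ are the decomposition and Cartan matrices of the group $F_x$, which is legitimate since the decomposition numbers for $H_{\K}$ coincide with those of $F_x$ by the Proposition preceding Theorem \ref{Hbrauer}. The key observation is that the duality ${}^*$ permutes the irreducible $\K$-characters $\hat\chi_i$ and the irreducible Brauer characters $\hat\phi_j$ in $\hat B_x$, and since decomposition numbers are preserved under ${}^*$ (that is, $d(\hat\chi_i^* |_{\CB_{p'}}, \hat\phi_j^*) = d(\hat\chi_i|_{\CB_{p'}}, \hat\phi_j)$), the matrix $C_x$ is invariant under the simultaneous permutation of its rows and columns induced by ${}^*$ on the $\hat\phi_j$. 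With the chosen ordering (pairs $\{\hat\phi_{2i-1},\hat\phi_{2i}\}$ swapped, the last $n_2$ fixed), this permutation is an involution $\tau$ acting as a product of transpositions on the first $2n_1$ indices and trivially on the last $n_2$.

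First I would write down precisely what it means for $C_x$ to be $\tau$-invariant in block form: the $n_2\times n_2$ block $C_1$ is unchanged, and $C_0$ and $C_2$ are shuffled among themselves, so the $2\times 2$-block structure of $C_0$ (with respect to the pairing) and the pairing of rows of $C_2^t$ is respected. Second, I would take determinants modulo $2$. Over $\mathbb{F}_2$ one can perform row and column operations: adding the row indexed by $\hat\phi_{2i}$ to the row indexed by $\hat\phi_{2i-1}$ (and similarly for columns) uses the $\tau$-symmetry to force many entries to coincide mod $2$, and in particular makes the off-diagonal coupling block $C_2$ vanish mod $2$ after symmetrization — this is the standard trick showing that over $\mathbb{F}_2$ a matrix invariant under a fixed-point-free involution on part of its index set has a determinant that splits. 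Concretely, $Det(C_x) \equiv Det(C_1) \cdot (\text{something})^2 \pmod 2$, or more simply the symmetrization shows $Det(C_x) \equiv Det(C_1)\pmod 2$ once the pieces coming from the paired indices are seen to contribute even terms. Third, since $Det(C_x)$ is a power of the odd prime $p$ by Theorem \ref{brauer} applied to $F_x$, it is odd, hence $Det(C_1)$ is odd.

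The cleanest route is probably to argue directly that $Det(C_x) \equiv Det(C_1) \pmod 2$: work mod $2$, and for each pair $(2i-1,2i)$ replace row $2i-1$ by (row $2i-1$) $+$ (row $2i$) and likewise for columns; by $\tau$-invariance the new paired rows/columns become equal in the block interacting with the self-dual part and with each other, which upon a further elimination step kills the contribution of the $C_0$ and $C_2$ blocks mod $2$ and leaves $Det(C_1)$. I would spell this out by induction on $n_1$, peeling off one pair at a time. The main obstacle I anticipate is bookkeeping: making the row/column operations over $\mathbb{F}_2$ rigorous and verifying that the $\tau$-symmetry really does force the required equalities of matrix entries mod $2$ — in particular confirming that $d(\hat\chi_i|_{\CB_{p'}},\hat\phi_j) = d(\hat\chi_i^*|_{\CB_{p'}},\hat\phi_j^*)$, which follows because $\hat\chi_i^*$ reduces to the dual of the reduction of $\hat\chi_i$ and the simple modules are permuted compatibly by duality. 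Everything else is linear algebra over $\mathbb{F}_2$ plus the already-established block decomposition of $\hat D$ and Brauer's theorem.
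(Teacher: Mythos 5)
Your proposal follows essentially the same route as the paper: use the involution on the indices of $\hat B_x$ induced by duality (pairs of non-self-dual Brauer characters swapped, self-dual ones fixed), observe that the Cartan matrix satisfies $c_{ij}=c_{\s(i)\s(j)}$, deduce a mod $2$ factorization of $Det(C_x)$ isolating the self-dual block, and finish with Brauer's theorem applied to $F_x$ (so $Det(C_x)=p^a$ is odd). The paper implements the mod $2$ step not by row and column operations but by pairing the terms $\tau\leftrightarrow\s\tau\s$ in the Leibniz expansion of $Det(C_x)$ over the permutations that do not stabilize the self-dual index set, which yields $Det(C_x)\equiv Det(C_0)\,Det(C_1)\pmod 2$.

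One caution on your formulation: the simplified claim $Det(C_x)\equiv Det(C_1)\pmod 2$, and the assertion that symmetrizing rows and columns ``kills the contribution of the $C_0$ and $C_2$ blocks,'' are not correct as general statements. The row/column operations you describe only annihilate the coupling entries in the rows (resp.\ columns) indexed by one member of each dual pair, so the resulting matrix is not block triangular and the $C_0$ block genuinely survives; the correct general congruence is $Det(C_x)\equiv Det(C_0)\,Det(C_1)\pmod 2$ (equivalently your first version, $Det(C_1)$ times a square, since $Det(C_0)$ is a square mod $2$). That $Det(C_0)$ is odd is only known a posteriori, from the fact that $Det(C_x)$ is odd. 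This does not damage your conclusion — oddness of $Det(C_x)$ together with the product congruence gives oddness of $Det(C_1)$ — but the cleaner bookkeeping is the paper's permutation-pairing argument rather than an elimination that leaves only $C_1$.
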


\begin{proof} For $i = 1, 2, \ldots, n$, let $P_i$ be the projective indecomposable $\k F_x$-module whose socle has Brauer character $\phi_i$, and let $\Phi_i$ be the Brauer character of $P_i$. Then 
$c_{ij} = (\Phi_i, \Phi_j).$

Let $\s = (1, 2)(3, 4) \cdots (2n_1-1, 2n_1)\in \CS_n$; also let $\s$ denote the corresponding permutation matrix. Let $\tilde{\CS}_n$ be the set of all permutations in $\CS_n$ which do not fix 
$\{ 2n_1+1, \ldots, n\}$. Since $\tilde{\CS}_n$ is the complement in $\CS_n$ of the centralizer of $\s$, 
it follows that $\s^{-1} \tilde{\CS}_n \s = \tilde{\CS}_n, $ and $\s$ has no fixed points on $\tilde{\CS}_n$.

Looking at the matrix $C_x$, it follows that $\s^{-1} C_x \s = C_x$ since  $(\hat\phi_i)^* =  \hat\phi_{i+1}$ 
for $i = 1, 3, \ldots, 2n_1 -1$ and $(\hat\phi_i)^* =  \hat\phi_i$ for $i = 2n_1 +1, \ldots, n$. Then 

$$ Det(C_x) = Det(C_0) Det(C_1) + \sum_{\tau \in \tilde{\CS}_n} sgn(\tau) c_{1\tau(1)}c_{2\tau(2)} 
\cdots c_{n\tau(n)}.$$

Moreover $c_{ij} = c_{\s(i) \s(j)}$, again since $\s^{-1} C_x \s = C_x$.
Choose $\tau \in \tilde{\CS}_n$ and set $\tau' = \s \tau \s$. Then $\tau' \neq \tau$, and it follows that 
$$\prod_{i = 1}^n c_{i\tau(i)} = \prod_{i = 1}^n c_{i\tau'(i)}.$$

Thus $Det (C_x) \equiv Det (C_0) Det (C_1)$ (mod 2). This proves the Lemma.
\end{proof} 

 \begin{lem}
 For each $j = 2n_1 +1, \ldots, n,$ there exists $i \in \{1, 2, \ldots, m\}$ such that $\hat{\chi}_i^* = 
 \hat{\chi}_i$ and 
the decompostion number $d_{ij}= d (\hat{\chi_i} |_{\CB_{p'}}, \hat \phi_j)$ is odd.
 \end{lem}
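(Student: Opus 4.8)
The goal is to show that each self-dual irreducible Brauer character $\hat\phi_j$ (for $j$ in the self-dual range $2n_1+1,\dots,n$) has odd "total self-dual decomposition multiplicity," i.e. there is a self-dual irreducible $\K$-character $\hat\chi_i$ of $H_\K$ with $d_{ij}$ odd. Since by the previous Proposition the decomposition matrix $\hat D_x$ (and hence $\hat C_x$) for $H_\K$ equals the decomposition (resp. Cartan) matrix $D_x$ (resp. $C_x$) of the group $F_x$, everything can and should be carried out at the level of the block $B_x$ of $RF_x$; the passage back up to $H_\K$ is then automatic via $\hat\chi_{i,x}$, $\hat\phi_{j,x}$ and the identity $\hat D_x = D_x$ already proved. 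So I will argue with $D_x$, $C_x$, the $\chi_i$ and $\phi_j$ directly, exactly as in Thompson's proof of \cite[Lemma 3.4]{Th}.

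**Key steps.** First, work modulo $2$ throughout (legitimate since $p$ is odd, so $\mathrm{Det}(C_1)$ is odd by the previous lemma, and $\mathrm{Det}(C_x)$ itself is odd). Recall $C_x = D_x^t D_x$, so $\mathrm{Det}(C_x) \equiv \mathrm{Det}(C_0)\mathrm{Det}(C_1) \pmod 2$ and both factors are odd. Next, observe that complex conjugation (duality) acts on the columns of $D_x$: if $\chi_i^* = \chi_i$ then the $i$-th row of $D_x$ is $\sigma$-invariant as a function of the column index, where $\sigma = (1\,2)(3\,4)\cdots(2n_1-1\,2n_1)$ as in the preceding lemma; and the non-self-dual $\chi_i$ come in pairs $\chi_i, \chi_i^*$ whose rows are swapped by $\sigma$. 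The plan is to restrict attention to the submatrix $D_1$ of $D_x$ consisting of the last $n_2$ columns (those indexed by self-dual $\phi_j$) and argue that the rows of $D_1$ coming from self-dual $\chi_i$'s must span enough of $\mathbb F_2^{n_2}$ to force, in each such column, some odd entry. Concretely: suppose for contradiction that for some self-dual $j_0$ we have $d_{ij_0}$ even for every self-dual $\chi_i$. Then compute $\mathrm{Det}(C_1) \bmod 2$ by the same pairing trick as in the previous lemma — pair up terms in the permutation expansion using $\sigma$ restricted to the self-dual block; the surviving ("diagonal in pairs") contributions involve only $c_{jj'} = \sum_i d_{ij}d_{ij'}$, and splitting the sum over $i$ into self-dual $i$ and paired-up $i$, the paired contributions to every $c_{jj'}$ with $j,j'$ self-dual are even (since $d_{ij} = d_{i^*j}$ there, giving $2 d_{ij} d_{ij'}$ terms... wait, need $d_{ij}d_{ij'} + d_{i^*j}d_{i^*j'} = 2d_{ij}d_{ij'}$), so mod $2$, $c_{jj'} \equiv \sum_{i \text{ self-dual}} d_{ij}d_{ij'}$. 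Hence $C_1 \equiv D_1^{sd,t} D_1^{sd} \pmod 2$ where $D_1^{sd}$ is the matrix of self-dual rows. If the $j_0$-th column of $D_1^{sd}$ were zero mod $2$, then the $j_0$-th row and column of $D_1^{sd,t}D_1^{sd}$ would vanish mod $2$, making $\mathrm{Det}(C_1)$ even — contradiction.

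**Main obstacle.** The delicate point is justifying the reduction $c_{jj'} \equiv \sum_{i\text{ self-dual}} d_{ij}d_{ij'} \pmod 2$ for self-dual $j, j'$, i.e. that the contribution of each dual pair $\{i, i^*\}$ is even. This needs the fact that a self-dual $\phi_j$ has $d_{ij} = d_{i^*j}$ — equivalently that $\chi_i|_{G_{p'}}$ and $\chi_{i^*}|_{G_{p'}} = (\chi_i|_{G_{p'}})^*$ have the same multiplicity of the self-dual $\phi_j$ — which holds because applying $*$ to the decomposition equation $\chi_i|_{G_{p'}} = \sum_j d_{ij}\phi_j$ permutes the $\phi_j$ by $\sigma$ and fixes $\phi_j$. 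Once this is in hand, the parity bookkeeping in the permutation expansion of $\mathrm{Det}(C_1)$ is routine (it is literally the argument of the previous lemma applied to the self-dual block with $\sigma$ replaced by $\sigma$ restricted there, noting that block has no self-dual entries to worry about because we have \emph{assumed} the $j_0$ column dies, which is what we want to contradict). I would present the contradiction cleanly: assume the conclusion fails at $j_0$; deduce $C_1 \bmod 2$ has a zero row; conclude $\mathrm{Det}(C_1)$ is even, contradicting the previous lemma. Finally, lift: the self-dual $\chi_i$ of $B_x$ induces to a self-dual $\hat\chi_{i,x}$ of $\hat B_x$ with $d(\hat\chi_{i,x}|_{\CB_{p'}}, \hat\phi_{j,x}) = d_{ij}$ by the Proposition, so taking $j$ with $\hat\phi_{j,x} = \hat\phi$ gives the required $\hat\chi$, completing the proof of Theorem \ref{main}.
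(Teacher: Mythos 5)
Your proposal is correct and follows essentially the same route as the paper: assuming $d_{ij_0}$ is even for every self-dual $\hat\chi_i$, you use $d_{ij}=d_{i^*j}$ for self-dual $\hat\phi_j$ (dual pairs of characters contribute evenly to $c_{j_0k}$ for self-dual $k$) to conclude that a row of $C_1$ is even mod $2$, contradicting the previous lemma that $\mathrm{Det}(C_1)$ is odd. The only cosmetic difference is your packaging of the congruence as $C_1\equiv (D_1^{sd})^t D_1^{sd} \pmod 2$ and working in the block of $F_x$ before lifting via $\hat D_x = D_x$, which matches the paper's argument in substance.
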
 
 
 \begin{proof} Let $m = 2m_1 + m_2$, where the notation is chosen so that $\{\hat\chi_1, \hat\chi_2 \},$ 
 $ \{\hat\chi_3, \hat\chi_4 \},$ $ \ldots, \{\hat\chi_{2m_1 -1}, \hat\chi_{2m_1}\}$, 
are pairs of non self-dual characters, that is, $(\hat\chi_{2i -1})^* =  \hat\chi_{2i}$, and $\hat\chi_{2m_1+1}, \ldots, \hat\chi_m$ are self-dual.

Suppose  $d_{ij} \equiv 0$ (mod 2), for all $i = 2m_1 +1, \ldots, m.$ Then for each 
$k \in \{1, 2, \ldots, n\}$, we have 
$$c_{jk} = \sum_{i = 1}^m d_{ij} d_{ik} \equiv \sum_{i = 1}^{2m_1} d_{ij} d_{ik}.$$
On the other hand, $\phi_j^* = \phi_j$ and if $k \in \{ 2n_1 +1, \ldots, n\}$ then $\phi_k^* = \phi_k$ and so
$$ d_{ij} = d_{i+1, k}, \ d_{ik} = d_{i+1, k}, \ i = 1, 3, \ldots, 2m_1-1,$$
hence $c_{jk} \equiv 0$ (mod 2), for all such $k$. This means that some row of $C_1$ 
consists of even entries. This violates the previous lemma. 
 
 \end{proof}

 As in \cite[Theorem 4.4]{GM}, we have the following consequence: 

\begin{cor} \label{orth} Consider the bismash products as above.

\noindent (1) If all irreducible $H_{\C}$-modules have indicator $+1$, the same
is true for all irreducible $H_{\k}$-modules.

\noindent (2) If all irreducible $H_{\C}$-modules have indicator $0$ or 1,
the same
is true for all irreducible $H_{\k}$-modules.

\noindent (3) If all irreducible $H_{\C}$-modules are self dual, the same is true
for all irreducible $H_{\k}$-modules.
\end{cor}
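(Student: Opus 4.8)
The plan is to derive Corollary \ref{orth} as a direct consequence of Theorem \ref{main}, exactly as \cite[Theorem 4.4]{GM} derives its group-theoretic statement from Thompson's theorem. The key observation is that Theorem \ref{main} gives, for each self-dual simple $H_{\k}$-module $\hat P$, an irreducible $\K$-character $\hat\chi$ of $H_{\K}$ with $\hat\chi^* = \hat\chi$ and $\nu(\hat\chi) = \nu(\hat P)$; moreover, since $\L$ is a splitting field for $H_{\Q}$ and $(\K,R,\k)$ is the $p$-modular system of Example \ref{mainex}, every irreducible $H_{\K}$-module is defined over $\L$ and hence corresponds to an irreducible $H_{\C}$-module with the same indicator (indicators being determined by the existence of a symmetric versus skew-symmetric invariant form, which is preserved under the field extension $\L \hookrightarrow \C$ by Theorem \ref{GMind}).

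First I would handle part (1). Suppose every irreducible $H_{\C}$-module has indicator $+1$; in particular every irreducible $H_{\K}$-module is self-dual, so by Lemma \ref{Hlincom} every irreducible Brauer character $\hat\phi_j$, and hence every irreducible $\k$-character $\hat\psi_j$, is self-dual, i.e.\ every simple $H_{\k}$-module $\hat P$ is self-dual with $\nu(\hat P) \in \{+1,-1\}$. Apply Theorem \ref{main} to $\hat P$: there is a self-dual irreducible $\hat\chi$ of $H_{\K}$ with $\nu(\hat\chi) = \nu(\hat P)$. But $\hat\chi$ comes from an irreducible $H_{\C}$-module, which has indicator $+1$ by hypothesis, so $\nu(\hat P) = +1$. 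Since $\hat P$ was arbitrary, all irreducible $H_{\k}$-modules have indicator $+1$.

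For part (2), the argument is the same except I would not assume self-duality globally. Let $\hat P$ be a simple $H_{\k}$-module. If $\hat P$ is not self-dual then $\nu(\hat P) = 0$ by Theorem \ref{GMind}(1), and there is nothing to prove. If $\hat P$ is self-dual, Theorem \ref{main} produces a self-dual irreducible $\hat\chi$ of $H_{\K}$ with $\nu(\hat\chi) = \nu(\hat P)$; the corresponding $H_{\C}$-module has indicator $0$ or $1$ by hypothesis, and being self-dual its indicator is not $0$, so it is $1$, whence $\nu(\hat P) = 1$. For part (3), self-duality of all irreducible $H_{\C}$-modules forces self-duality of all irreducible $H_{\K}$-modules, hence — again by Lemma \ref{Hlincom} — self-duality of all irreducible $H_{\k}$-modules, with no indicator computation needed.

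The main obstacle, such as it is, is purely bookkeeping: one must be careful that the passage between $H_{\C}$, $H_{\L}$, $H_{\K}$ and $H_{\k}$ is legitimate — namely that an irreducible $H_{\K}$-module is realizable over the number field $\L$ (true since $\L$ splits $H_{\Q}$), that base change $\L \to \C$ preserves the type of the invariant form (true by Theorem \ref{GMind}(2), since a symmetric nondegenerate form stays symmetric and nondegenerate), and that the hypothesis "indicator over $\C$" can therefore be read as "indicator over $\K$." Once these identifications are in place, each of (1), (2), (3) is a one-line deduction from Theorem \ref{main} and Theorem \ref{GMind}, so no substantive difficulty remains.
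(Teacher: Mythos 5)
Your proposal is correct and follows essentially the same route as the paper: part (3) comes from Lemma \ref{Hlincom}, part (2) from Theorem \ref{main} together with Theorem \ref{GMind}(1) (ruling out indicator $-1$), and part (1) by combining self-duality with the indicator statement, the only cosmetic difference being that you prove (1) directly rather than deducing it from (2) and (3). Your explicit remarks on transferring indicators between $H_{\C}$, $H_{\L}$ and $H_{\K}$ are sound bookkeeping that the paper leaves implicit.
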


\begin{proof} (3) This follows by Lemma \ref{Hlincom}.
 
Now consider  (2).  By Theorem \ref{main}
there are no irreducible $kG$-modules $V$ with $\nu(V)=-1$.
So (2) follows immediately.

Now (1) follows by (2) and (3).

\end{proof}

%-------------Section 7. Application to the symmetric group-------------------------------------

\section{Applications to the symmetric group}

In this section we apply the results of Section 6 to bismash products constructed from some specific groups.

Let $\CS_n$ be the symmetric group of degree n, consider 
$\CS_{n-1} \subset \CS_n$ by letting any $\sigma\in \CS_{n-1}$ fix $n$, and let $C_n = \langle z \rangle$, 
the cyclic subgroup of $\CS_n$ generated by the $n$-cycle 
$z =  (1,2, \ldots,n)$. Then $\CS_n = \CS_{n-1}C_n = C_n\CS_{n-1}$ shows that $Q = \CS_n$ is factorizable. Thus we may construct the bismash product $H_{n, \E} := \E^{C_n}\# \E S_{n-1}$.  It was shown 
in \cite{JM} that if $\E$ is algebraically closed of characteristic 0, then $H_n$ is totally orthogonal; 
that is, every irreducible module has indicator +1. 

\begin{cor} Let $\k$ be algebraically closed of characteristic $p>0$ and let 
$H_{n,\k} := \k^{C_n}\# \k S_{n-1}$. Then $H_{n,\k}$ is totally orthogonal. 
\end{cor}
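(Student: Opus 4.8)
The plan is to obtain the statement as an immediate consequence of the lifting result of Section~6, so that no new argument is needed beyond checking that its hypotheses are met. First I would recall the characteristic-zero input: it is shown in \cite{JM} that for $\E$ algebraically closed of characteristic $0$ the bismash product $\E^{C_n}\#\E\CS_{n-1}$ is totally orthogonal, every irreducible module having Frobenius--Schur indicator $+1$. In particular this applies to $H_{n,\C}=\C^{C_n}\#\C\CS_{n-1}$.

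Next I would invoke Corollary~\ref{orth}(1) for the factorization $\CS_n=\CS_{n-1}C_n$, taking $H_\C=H_{n,\C}$ and $H_\k=H_{n,\k}$. Since all irreducible $H_{n,\C}$-modules have indicator $+1$, the corollary yields that all irreducible $H_{n,\k}$-modules have indicator $+1$, i.e.\ $H_{n,\k}$ is totally orthogonal. Internally the construction of Section~6 passes through $H_{n,\Q}=\Q^{C_n}\#\Q\CS_{n-1}$ and a splitting number field $\L$ with a $p$-modular system as in Example~\ref{mainex}; this causes no trouble, since total orthogonality over $\C$ already holds over $\L$ (an $H_{n,\L}$-invariant non-degenerate symmetric form is obtained by descent), after which Corollary~\ref{orth} takes over.

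There is no real obstacle here: the substantive work lies in \cite{JM} for the characteristic-zero case and in Sections~4--6 for the lifting theorem. The only point to flag is the standing hypothesis $p\neq 2$, which enters through the $p$-modular system of Example~\ref{mainex} and through the oddness of $Det(C_x)$ in the proof of Theorem~\ref{main}; thus the corollary is to be read for odd $p$, consistently with the rest of the paper. Notably, no restriction such as $p\nmid n$ or $p\nmid(n-1)!$ is required: the Brauer-character machinery of Sections~4--6 is precisely what makes Corollary~\ref{orth} valid for every odd $p$, irrespective of the orders of the groups involved, so the genuinely modular cases are covered uniformly. Consequently the proof is a one-line consequence of \cite{JM} and Corollary~\ref{orth}.
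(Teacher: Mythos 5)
Your proposal is correct and matches the paper's own proof: cite the characteristic-zero total orthogonality of $H_{n,\C}$ from \cite{JM} and then apply Corollary \ref{orth}(1), with the standing hypothesis $p\neq 2$ understood. The extra remarks about the splitting field $\L$ and the absence of any restriction like $p\nmid n$ are accurate but not needed beyond what the paper records.
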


\begin{proof} Apply Corollary \ref{orth} to the characteristic 0 result of \cite{JM} mentioned above. 
\end{proof}

\begin{remark}  {\rm In \cite{GM} it is proved that $D(G)$ is totally orthogonal for any finite real reflection group $G$ over any algebraically closed field. Corollary \ref{orth} shows that this result in characteristic $p>0$  follows from the case of characteristic 0, which is somewhat easier to prove. When $G = S_n$, the 
characteristic 0 case was shown in \cite{KMM}.}
\end{remark}

We close with a question. 

\begin{ques} {\rm It would be interesting to know if our results could be extended to bicrossed products. However to 
extend our proof one would need a theory of Brauer characters for twisted group algebras (that is, for 
projective representations) which includes a version of Brauer's theorem on the Cartan matrix. }
\end{ques}

\vs

{\bf Acknowledgement:} The authors would like to thank I. M. Isaacs, S. Witherspoon, and especially 
R. M. Guralnick, for helpful comments. 

%------------------------bibliography--------------------------------------

\end{document}